\documentclass[reqno,10pt]{amsart}
\usepackage{amscd,amssymb,amsmath,color,url}
\usepackage{latexsym}
\usepackage[all]{xy}
\usepackage{defs}
\usepackage[colorlinks=true]{hyperref}

\usepackage{tikz-cd}

\def\LogBl{{\rm LogBl}}
\def\.{{,\dots,}}
\def\dashto{\dashrightarrow}

\begin{document}
\title{Semistable reduction over thick log points}
\author{Alexander E. Motzkin and Michael Temkin}

\thanks{This research is supported by ERC Consolidator Grant 770922 - BirNonArchGeom and BSF grant 2018193}

\address{Einstein Institute of Mathematics\\
               The Hebrew University of Jerusalem\\
                Edmond J. Safra Campus, Giv'at Ram, Jerusalem, 91904, Israel}\email{alex.motzkin@mail.huji.ac.il}
\email{michael.temkin@mail.huji.ac.il}

\begin{abstract}
We establish a version of a semistable reduction theorem over a log point with a non-trivial nilpotent structure. In order to do this we extend the classical desingularization theories to non-reduced schemes with generically principal nilradical.
\end{abstract}

\subjclass{14E15}
\keywords{Semistable reduction, thick manifolds}
\maketitle

\section{Introduction}

The main goal of this work is to establish a version of semistable reduction (or relative desingularization) of schemes over a base of the form $B_n=\Spec(k[\pi]/(\pi^n))$, where $k$ is a field of characteristic zero. In particular, we were asked about existence of such a result by Christian Schnell and Junchao Shentu. The original hope and motivation was that a strong enough solution of this problem could lead to a progress towards invariance of plurigenera for compact Kaehler manifold. Unfortunately, our results do not provide this and, moreover, indicate (though not prove) that the original hopes were too optimistic, see Remark~\ref{optimistic}. 

We explain in \S\ref{logsmsec} which type of semistable reduction over $B_n$ we construct in the end and argue why this is the natural one and, probably, the strongest one which exists in general. Obstacles for stronger resolutions are not studied in the paper, but it seems plausible that they are related to non-triviality of (a correct version of) birational geometry of log schemes over a log point (e.g. snc schemes).

Note that a general theory of resolution of relative (log) morphisms $X\to B$ was recently developed in \cite{ATW} under the assumption that $B$ is log regular, so it cannot be directly applied to our case. There is a way to generalize that theory to the base we need, but it will be worked out elsewhere, and in this work we decided to use more standard tools. In fact, we manage to deduce our main result from the classical resolution and principalization theorems.

\subsection{Log smoothness over thick points}\label{logsmsec}
We would like to stress that a priori it is not even so clear how to formulate an appropriate semistable reduction conjecture whose goal is to resolve a generically smooth morphism $X\to B=B_n$. For comparison, recall the situation with semistable reduction over a trait, say, over $S=\Spec(R)$ with $R=k[\pi]_{(\pi)}$. If $Z\to S$ has a connected but not irreducible closed fiber $Z_s$, for example, $Z=\Spec(R[x,y]/(xy-\pi))$, then by Zariski's connectedness theorem the same will be true for any modification $Z'$ of $Z$, and hence $Z'\to S$ is not smooth. The best one can do is to resolve $Z$ and make $Z_s$ an snc divisor. In the zero characteristic case this improves the morphism $Z\to S$ drastically: \'etale-locally $Z$ becomes isomorphic to the model charts of the form $\Spec(R[t_1\.t_n]/(t_1^{d_1}\dots t_n^{d_n}-\pi))$, in particular, it is log smooth with respect to the natural closed fiber log structures. For the sake of completeness, we note that such a morphism is called semistable if $d_i\in\{0,1\}$, and one can further make $Z\to S$ semistable by an extension of $R$, normalizing the pullback of $Z$ and then logarithmically (or toroidally) blowing up this nolrmalized pullback, but we will not need this.

Adopting the logarithmic approach is also absolutely critical for the results of \cite{ATW}, so it is natural to adopt it in our case too and provide $B$ with the log structure induced by $\bbN\log\pi$. Nevertheless, it turns out that one cannot in general modify a generically smooth $B$-scheme $X$ to $X'$ which is log smooth over $B$. Here is the simplest example: $X=\Spec(k[x,y]/(y^n))$ with the morphism $X\to B$ given by $\pi=xy$. Indeed, the morphism is not smooth at $x=0$ because the nilradical of $\calO_X$ is not generated by $\pi$ at this point. The situation does not improve if we increase the log structure on $X$ by adding $\bbN\log x$. Moreover, any modification $X'\to X$ induces an isomorphism of the reductions and only increases the nilpotent structure (for example, by replacing $y$ by $y'=y/x^n$), hence it does not improve the situation either. The only way to achieve some sort of resolution of $X\to B$ is to use the whole class of smooth $B$-curves, including the reducible ones, and there are plenty of them. For example, $Y=\Spec(k[x,y]/(x^ny^n))$ is log smooth over $B$ (where $\pi=xy$), and $X$ is an irreducible component of $Y$. In fact, this example is the closed fiber of the morphism $Z\to S$ from the example in the above paragraph, but reducible closed fibers over $S$ show up naturally, while reducible schemes over $B$ at first glance look as a pathology, and by no means they can show up as modifications of irreducible ones.

We hope that the above discussion and relation to the classical semistable reduction over $S$ give enough motivation for our definition: a desingularization of a $B$-scheme $X$ is a modification $X'\to X$ such that locally $X'$ can be realized as a monomial subscheme of codimension zero of a log smooth $B$-scheme $Y$. In other words, $X'$ is locally defined in $Y$ by vanishing of monomials and its reduction is a reduced irreducible component of $Y$. Moreover, for our needs it will suffice to work with a log smooth $Y$ of a special combinatorial type, see \S\ref{intrsec} below.

\begin{rem}
Here is another argument in favor of our definition of desingularization over $B$. We have already mentioned that principalization of ideals on log smooth $B$-schemes is possible (and will be developed elsewhere). Note only that principalization over a log regular $B$ in \cite{ATW} involves modifications of the base, but if $B$ is a trait with the maximal log structure, then no non-trivial modifications exist and we work over the original $B$. Similarly, if $B$ is arbitrary, then principalization over it is possible only once $B$ is replaced by a modification of a log blowing up. For example, the base $B=\Spec(k)$ with the hollow log structure $\bbN\log x+\bbN\log y$ can be replaced by the log blowing up along $(\log x,\log y)$, which is isomorphic to $\bbP^1_k$ (see \cite[Example~4.23(i)]{lognotes}). However, if $B=B_n$, then it possesses no non-trivial modifications or log blowings up, so the principalization takes place over it.

As usually, the principalization over $B$ can be used to obtain a desingularization of $B$-schemes $X$ as follows. Separate components, reducing to the case when $X$ is irreducible. Locally embed $X$ into a log smooth $B$-scheme $M$. Principalize $\calI_X$ on $M$ by blowing up centers which are monomial on $B$-log smooth subschemes of $M$ and stop when the new blowing up center $V$ contains the generic point of the strict transform $X'$ of $X$. In particular, $V$ is log smooth over $B$ and $X'$ is its (non-necessarily reduced) irreducible component. In the situation of \cite{ATW} (similarly to the classical resolution) the connected components of $V$ are irreducible, hence $X'$ is one of them and $X'\to X$ is a $B$-resolution. In our situation, a log smooth $V$ does not have to be irreducible, hence we only obtain a modification $X'$ of $X$ which is an irreducible component of a log smooth $V$. This is precisely the above definition of a $B$-desingularization.
\end{rem}

\begin{rem}\label{optimistic}
(i) One can wonder if there exists another type of resolutions: a proper morphism $X'\to X$ which is birational on one component of $X'$ and such that $X'\to B$ is log smooth. We do not study this question in the paper, but it seems plausible that in general this is impossible.

(ii) One can also ask if the embedding of $X'$ as a (non-reduced) irreducible component of a $B$-log smooth $Y$ can be made globally. This also seems to be a not so natural question, though we show in the paper that this is possible, and if $X\to B$ is proper, then $Y$ can also be taken to be proper over $B$. This construction uses log blowings up, which are natural non-birational proper morphisms $X'\to X$ (which contract components). However, log blowings up cannot be used to attack the question in (i) because they are log \'etale, and hence if $X\to B$ is not log smooth, then the same is true for $X'\to B$.
\end{rem}

\subsection{Main results}\label{intrsec}
As in the main body of the paper we will use a down to earth description which does not involve log geometry. A pair $(X,E)$, where $X$ is a $B$-scheme of finite type and $E$ is an effective Cartier divisor on $X$, is called {\em distinguished} (see \S\ref{Bpairs}) if each point of $X$ possesses a neighborhood $X'$ with $E'=E\times_XX'$ and an \'etale morphism 
$$h\:X'\to X_0=\Spec(k[\veps,t_1\.t_m]/(\veps^n))$$ given by $\pi=\veps t_1^{d_1}\dots t_r^{d_r}$ and such that $E'=V(t_1\dots t_r)$. In particular, $X_0$ is a monomial irreducible component of the log smooth $B$-scheme $Y_0=\Spec(C)$ given by $\veps^n=0$, where $$C=k[\pi,\veps,t_1\.t_m]/(\pi^n,\pi-\veps t_1^{d_1}\dots t_r^{d_r})=k[\veps,t_1\.t_m]/(\veps^nt_1^{nd_1}\dots t_r^{nd_r}).$$ 

\begin{rem}
An \'etale morphism to a closed subscheme locally extends to a scheme, hence shrinking $X'$ further if necessary we can factor $X'\to Y_0$ into composition of a closed immersion $X'\into Y'$ and an \'etale morphism $Y'\to Y$. Thus, locally $X$ is isomorphic to a monomial irreducible component $X'$ of a log smooth $B$-scheme $Y'$ with the log structure of a very special form: it is generated over the log structure $\bbN\log\pi$ of $B$ by $\log(\veps),\log(t_1)\.\log(t_m)$ subject to the relation $\log(\veps)+\sum_{i=1}^rd_i\log(t_i)=\log(\pi)$.
\end{rem}

Now let us formulate a light version of our main result -- Theorem~\ref{mainth1}.

\begin{theor}\label{mainth01}
Let $k$ be a field of characteristic zero and $B=\Spec(k[\pi]/(\pi^n))$. Then there exists a procedure $\calF$ compatible with smooth morphisms $Y\to X$ which obtains as an input a generically smooth morphism of finite type $X\to B$ and outputs a modification $\calF(X)\:X'\to X$ with a Cartier divisor $E'$ such that the $B$-pair $(X',E')$ is distinguished.
\end{theor}

In addition, we will show in Theorem \ref{mainth1} that one can choose $\calF$ compatible with an initial boundary $E$ on $X$ and such that it makes monomial a given closed subscheme $Z\into X$. So, this also includes an analogue in our setting of the classical principalization theorem.

\begin{rem}
One can also wonder if it is possible to achieve that $d_i\in\{0,1\}$. As in the classical case, this is possible after base change with respect to a morphism $\Spec(k[\pi']/(\pi'^{dn}))=B'\to B$ sending $\pi$ to $\pi'^d$, where $d$ is large enough. The proof is purely combinatorial -- one constructs a log blowing up $X''\to X'=X\times_BB'$ through a polytope subdivision provided by the main combinatorial result of \cite{KKMS}. A down to earth way is to copy the argument in the proof of \cite[Theorem~4.4.4]{Temkin-embedded}. An alternative is just to apply \cite[Theorem~4.4]{semistable} to the morphism $X\to B$, as a monoidal alteration of $B$ in the sense of \cite{semistable} is precisely a morphism $B'\to B$ as above. Note also that in our case $\oM_B=\bbN$, so the combinatorial ingredient is again the main result of \cite{KKMS} (though for a general $B$ \cite[Theorem~4.4]{semistable} is based on its generalization to maps of polytopes). 
\end{rem}

\subsection{Outline of the paper}
Now, let us briefly describe our other results and the structure of the paper. In Section~\ref{thicksec} we extend some resolution results to non-reduced schemes. For the sake of completeness we recall in \S\ref{Hirsec} Hironaka's resolution of non-reduced schemes by {\em thick manifolds} -- schemes $X$ with a regular reduction $\tilX=X^\red$ and normally flat along $\tilX$. However, we only need to work with schemes $X$ whose nilradical is generically principal, and their resolution is deduced in \S\ref{desingsec} from the classical functorial principalization: in Theorem~\ref{nonembeddedth} we show that any such $X$ possesses a functorial modification $X'$, which is a principally thick manifold or a ptm -- a thick manifold with a locally principal nilradical, see \S\ref{ptmsec}.

In \S\ref{princsec} we deduce principalization of subschemes of a ptm $X$ from the classical principalization of subschemes of $\tilX$, and in \S\ref{Bschemes} we prove our main result on resolution of varieties over $B$. As in the case of $S$-schemes, we just resolve $X$ (by a thick manifold since $X$ is non-reduced) and then principalize the divisor $(\pi)$ on $X$. There are small additional subtleties in this case, which are dealt with in \S\ref{monomsec}. Finally, in \S\ref{othersec} we explain how analogous results can be proved for certain quasi-excellent schemes, formal schemes and analytic spaces.

Section \ref{mainsec} is devoted to proving that in the case of varieties, after an additional work the resolved $B$-scheme $X'$ can be globally embedded into a log smooth $B$-scheme $Y$. This operation is less functorial, it depends on the choice of a retract, and our argument does not apply to general qe schemes or analytic varieties, because a retract does not have to exist in these cases. It might be the case that the generalization of our main result we prove in Theorem~\ref{mainth} addresses a not so natural question. Still, the intermediate results we prove about retracts and factorization of birational morphisms of non-reduced schemes can be of an independent interest.

\subsection{Conventions}
All log schemes are always assumed to be fs. By a {\em manifold} we mean a regular noetherian scheme or a smooth variety, if it is clear from the context that we are working with varieties. Usually this is used in the context of resolution of singularities, when a singular scheme is embedded into an ambient manifold.

\section{Desingularization on principally thick manifolds}\label{thicksec}
In this section we will extend the classical resolution theory to schemes which do not have to be generically reduced but whose nilradical is generically principal. Everything will be deduced rather straightforwardly from the classical theory.

\subsection{Hironaka's resolution of non-reduced schemes}\label{Hirsec}
In the famous 1964 paper \cite{Hironaka} Hironaka also established desingularization of arbitrary non-reduced varieties that we are going to recall briefly.

\subsubsection{Thick manifolds}
Let us fix some terminology first. We warn the reader that the notion of thick manifolds is non-standard.

\begin{defin}\label{thickdef}
(i) A scheme $X$ is {\em normally flat} along a subscheme $Y$ given by an ideal $\calI\subseteq\calO_X$ if $\calI^n/\calI^{n+1}$ is a flat module over $\calO_Y=\calO_X/\calI$ for any $n$.

(ii) A locally noetherian scheme $X$ is called a {\em thick manifold} if its reduction $X^\red$ is regular and $X$ is normally flat along $X^\red$.

(iii) By {\em thickness} of a noetherian scheme $X$ at a point $x$ we mean the sum $h_X(x)$ of dimensions of the $k(x)$-vector spaces $(\calN^n/\calN^{n+1})\otimes k(x)$, where $\calN$ is the nilradical of $\calO_X$.
\end{defin}

The normal-flatness condition means that although there might be a non-reduced structure, it behaves ``as nice as possible''.

\begin{lem}\label{thicklem}
(i) If $\eta\in X$ is a generic point, then $h_X(\eta)$ is the length of the Artin ring $\calO_{X\eta}$.

(ii) The thickness function $h_X\:X\to\bbN$ is upper-semicontinuous, and $X$ is normally flat along $X^\red$ if and only if $h_X$ is a locally constant function. In the latter case, $X$ has no embedded components.
\end{lem}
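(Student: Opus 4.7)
The plan is to handle (i) directly from the definition of length of an Artin local ring, and then to prove (ii) in three steps: upper-semicontinuity, the equivalence between normal flatness and local constancy of $h_X$, and the absence of embedded components.

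For (i), since $\eta$ is a generic point of a noetherian scheme, the local ring $\calO_{X,\eta}$ is Artinian and its unique maximal ideal is precisely the stalk $\calN_\eta$ of the nilradical (the quotient $\calO_{X,\eta}/\calN_\eta=k(\eta)$ is a field). The length of an Artin local ring with residue field $k(\eta)$ is $\sum_{n\ge 0}\dim_{k(\eta)}\calN_\eta^n/\calN_\eta^{n+1}$, which is $h_X(\eta)$ by definition.

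For (ii), I first observe that $\calN$ is nilpotent by noetherianness, so only finitely many graded pieces $\calN^n/\calN^{n+1}$ are non-zero and the sum defining $h_X$ is finite. Each graded piece is a coherent $\calO_{X^\red}$-module, so by Nakayama the function $x\mapsto\dim_{k(x)}(\calN^n/\calN^{n+1})\otimes k(x)$ is upper-semicontinuous, and a finite sum of such functions is upper-semicontinuous, giving the first assertion. For the equivalence, the forward direction is the standard fact that a coherent flat module over a locally noetherian scheme is locally free, so normal flatness makes each $\calN^n/\calN^{n+1}$ a locally free $\calO_{X^\red}$-module of locally constant rank, and summing ranks shows $h_X$ is locally constant. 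The converse is the slightly subtler direction: given that the \emph{sum} $h_X$ is locally constant while each summand is upper-semicontinuous with non-negative integer values, a standard elementary argument in a small neighborhood forces each individual summand to be locally constant. Local constancy of the fiber dimension of a coherent sheaf on the \emph{reduced} scheme $X^\red$ then implies local freeness, hence flatness, which is normal flatness.

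Finally, to rule out embedded components under normal flatness, I would use the finite filtration $\calO_X\supset\calN\supset\calN^2\supset\dots\supset\calN^m=0$ together with the standard inclusion $\mathrm{Ass}(\calO_X)\subseteq\bigcup_n\mathrm{Ass}(\calN^n/\calN^{n+1})$. Each graded piece $\calN^n/\calN^{n+1}$ is an $\calO_X$-module killed by $\calN$, so its associated primes (viewed in $X$) correspond under $\Spec(\calO_{X^\red})\hookrightarrow\Spec(\calO_X)$ to its associated primes as an $\calO_{X^\red}$-module, and being locally free over the reduced scheme $X^\red$ these are contained among the minimal primes of $\calO_{X^\red}$. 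Therefore every associated prime of $\calO_X$ is a minimal prime, i.e., $X$ has no embedded components.

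The main delicate point I expect is the converse half of the equivalence: teasing apart local constancy of the sum into local constancy of each summand, which relies essentially on the upper-semicontinuity of the individual terms. The identification of associated primes under the bijection of primes containing $\calN$ also needs a brief justification, but is routine once the filtration argument is in place.
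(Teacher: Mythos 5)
Your proofs of (i) and of the upper-semicontinuity and equivalence in (ii) match the paper's argument: the paper treats (i) as immediate, and for (ii) it likewise invokes the standard facts that for a coherent $\calO_{X^\red}$-module $M$ the fiber-rank function is upper-semicontinuous and is locally constant exactly when $M$ is flat, then observes (as you do) that since each summand of $h_X$ is upper-semicontinuous, local constancy of the sum forces local constancy of each summand.

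For the claim that $X$ has no embedded components, your argument takes a genuinely different route from the paper's. You argue directly: normal flatness makes each $\calN^n/\calN^{n+1}$ locally free over the reduced scheme $X^\red$, so its associated primes are among the minimal primes, and the filtration inclusion $\mathrm{Ass}(\calO_X)\subseteq\bigcup_n\mathrm{Ass}(\calN^n/\calN^{n+1})$ then forces $\mathrm{Ass}(\calO_X)$ to consist only of minimal primes. The paper instead argues by contraposition and reuses the semicontinuity just established: if $X$ had an embedded component, one could pass to a closed subscheme $X'\into X$ that agrees with $X$ away from a nowhere-dense, non-empty $V$; then for a generic point $\eta$ specializing to $x\in V$ one gets $h_X(\eta)=h_{X'}(\eta)\le h_{X'}(x)<h_X(x)$, contradicting local constancy of $h_X$. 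Both arguments are correct. Yours is the more classical commutative-algebra route (filtration and associated primes) and is self-contained once you know flat+coherent over a reduced noetherian scheme has only minimal associated primes; the paper's argument is shorter given that upper-semicontinuity is already in hand, trading the $\mathrm{Ass}$ bookkeeping for the geometric ``jump'' of the thickness invariant at the embedded locus.
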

\begin{proof}
The first claim is obvious. In (ii) set $Y=X^\red$ and note that for a finite $\calO_Y$-module $M$ the function $r_M(x)=\dim_{k(x)} M\otimes k(x)$ is upper semicontinuous on $Y$, and $r_M$ is locally constant if and only if $M$ is flat. The equivalence in (ii) follows by applying this to $h_X=\sum_nr_{\calN^n/\calN^{n+1}}$ -- due to the semicontinuity, $h_X$ is locally constant if and only if each summand is. Finally, if $X$ has an embedded component, then there exists a closed immersion $X'\into X$ such that the locus $V$ where $X'\neq X$ is nowhere dense but non-empty. In this case, $h_X=h_{X'}$ outside of $V$ and $h_{X}$ jumps on $V$. In particular, for any generic point $\eta$ with a specialization $x\in V$ one has $h_X(\eta)=h_{X'}(\eta)\le h_{X'}(x)<h_X(x)$ and hence $X$ is not a thick manifold.
\end{proof}

\subsubsection{Hironaka's theorem}\label{Hironaka}
In a sense, thick manifolds are non-reduced schemes which are as close to regular ones as possible, and desingularization of non-reduced schemes by thick manifolds is the best one can hope for -- one has to keep the nilpotent structure at the generic points, but it becomes as simple as possible everywhere else.

\begin{defin}\label{moddef}
(i) Throughout this paper a morphism $X'\to X$ is called a {\em modification} if it is proper and induces an isomorphism $U'=U$ of dense open subschemes.

(ii) A {\em desingularization} of a scheme $X$ is a modification $X'\to X$ such that $X'$ is a thick manifold.
\end{defin}

Hironaka proved that this desingularization is, indeed, possible.

\begin{theor}[{\cite[Main Theorem $\rm I^*$]{Hironaka}}]
There exists a construction which associates to any variety $X$ of characteristic zero a projective desingularization $\calF(X)=X'\to X$
\end{theor}

In fact, Hironaka proved his theorem for the more general class of schemes of finite type over a local quasi-excellent ring of characteristic zero.

\subsubsection{Strong resolution}
Hironaka's proof is existential, but modern methods that grew from his approach can also provide a smooth-functorial desingularization. Unfortunately, a convenient reference is missing in the literature, so let us describe the situation briefly. Nowadays there are two main resolution methods, both originating from Hironaka's work, that will be called {\em basic} (or weak in the literature) and {\em strong}. Basic resolution is achieved by blowing up smooth centers in the ambient manifold $M$, but the induced blowings up of the scheme $X$ itself may have singular centers. Its primary invariant is the order of an ideal $I_X\subset\calO_M$. This resolution was studied in the majority of recent works and it is smooth-functorial by results of W{\l}odarczyk, see \cite{Wlodarczyk}.

Strong resolution uses the whole Hilbert-Samuel function as the primary invariant and it only blows up smooth centers such that $X$ is normally flat along them. The price one has to pay is that the process is much heavier, as well as the proofs. In fact, one reduces this resolution to the basic one by encoding the HS function in terms of the order of another ideal (called a presentation). So far, the only known way to resolve general non-reduced schemes is via a strong resolution methods. A canonical strong desingularization was established by Bierstone-Milman, see \cite[Theorem~11.14]{Bierstone-Milman} (though it was only formulated in the embedded version), and its functoriality properties were briefly checked in \cite[Theorem 6.1. Addendum]{BMT} (one should check that the reduction to a basic method is smooth-functorial).

However, we will see that already the basic method suffices to resolve varieties with generically principal nilradical, and this is the only case we will need later. In particular, smooth-functoriality is then automatic.

\subsection{Principally thick manifolds}\label{ptmsec}
We say that a scheme $X$ is {\em nilprincipal} if its nilradical is a locally principal ideal. If, in addition, $X$ is a thick manifold, then we say that $X$ is a {\em principally thick manifold} or a {\em ptm}.

% we mean a scheme $X$nilprincipal thick manifold.scheme $X$ such that $X^\red$ is regular, the nilradical of $X$ is generically principal, and $X$ has no embedded components. We will show in Lemma~\ref{nilprincipallem} that such an $X$ is nothing else but a thick manifold with a locally principal nilradical, but what we will really need in the sequel is the following local description of such schemes.

\begin{defin}
Let $X$ be a ptm. By a {\em family of parameters} at a point $x\in X$ we mean a family $\veps,t_1\.t_d\in\calO_{X,x}$ such that $\veps$ generates the nilradical of $\calO_{X,x}$ and $t_1\.t_d$ induce a family of regular parameters of $\calO_{X^\red,x}=\calO_{X,x}/(\veps)$. We call $\veps$ a {\em nilpotent parameter} and call $t_1\.t_d$ {\em regular parameters}.
\end{defin}

Thickness on a ptm has an especially simple interpretation.

\begin{lem}\label{thickness}
Let $X$ be a ptm, $x\in X$ a point and $\veps\in\calO_x$ a nilpotent parameter at $x$. Then the thickness $h_X(x)$ is the minimal number $h$ such that $\veps^h=0$.
\end{lem}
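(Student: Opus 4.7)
The plan is to compute $h_X(x)$ directly from the filtration of $R=\calO_{X,x}$ by powers of its nilradical. Since $\veps$ is a nilpotent parameter, $\calN_R=(\veps)$, and $R/(\veps)=\calO_{\tilX,x}$ is a regular local ring with residue field $k(x)$. Let $h$ be the minimal integer with $\veps^h=0$; then $\calN_R^n=(\veps^n)$ vanishes precisely for $n\ge h$. By Lemma~\ref{thicklem} (and the way $h_X$ sums the dimensions of the graded pieces), it suffices to show that $(\veps^n)/(\veps^{n+1})\otimes_R k(x)$ has $k(x)$-dimension $1$ for each $0\le n<h$ and $0$ for $n\ge h$.

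For $n\ge h$ there is nothing to do. For $0\le n<h$, the crucial observation is that $(\veps^n)/(\veps^{n+1})$ is a \emph{cyclic} module over $\calO_{\tilX,x}$ (generated by the image of $\veps^n$, and killed by $\veps$), while normal flatness of $X$ along $\tilX$ guarantees that it is also \emph{flat} over $\calO_{\tilX,x}$. A finitely generated flat module over a Noetherian local ring is free, and a cyclic free module has rank $0$ or $1$. The zero case is ruled out by a standard unit trick: if $\veps^n\in(\veps^{n+1})$, write $\veps^n=\veps^{n+1}r$, so $\veps^n(1-\veps r)=0$; since $\veps$ lies in the maximal ideal of $R$, the element $1-\veps r$ is a unit and we conclude $\veps^n=0$, contradicting the minimality of $h$. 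Hence $(\veps^n)/(\veps^{n+1})\cong\calO_{\tilX,x}$, which contributes $1$ to $h_X(x)$ after tensoring with $k(x)$.

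Summing the $h$ contributing pieces yields $h_X(x)=h$. The only substantive ingredient is the use of normal flatness to promote ``cyclic'' to ``free of rank one'' -- absent this, a cyclic $\calO_{\tilX,x}$-module could have an arbitrary annihilator and its dimension over $k(x)$ would not be pinned down. Once this is in hand, the rest is a direct calculation relying only on the fact that elements of $1+\calN_R$ are units.
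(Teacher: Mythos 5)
Your proof is correct, but it routes through an unnecessary ingredient, and the accompanying commentary misidentifies where the real work lies. The paper's proof does not invoke normal flatness at all: since $\calN_x=(\veps)$, the graded piece $\calN_x^n/\calN_x^{n+1}$ is a cyclic module over the local ring $\calO_{X,x}$ (or equivalently over $\calO_{\tilX,x}$), and a cyclic module over a local ring has $k(x)$-fiber of dimension $1$ if it is nonzero and $0$ if it is zero, independently of its annihilator. (Indeed $\calO_{\tilX,x}/J\otimes k(x)=\calO_{\tilX,x}/(J+m_{\tilX,x})$, which is $k(x)$ whenever $J\neq\calO_{\tilX,x}$.) So the entire content is the nonvanishing $\calN_x^n/\calN_x^{n+1}\neq 0$ for $n<h$, which follows from Nakayama's lemma applied to the finitely generated module $\calN_x^n$ — this is exactly your ``unit trick'' in another guise.

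Thus your remark that ``absent [normal flatness], a cyclic $\calO_{\tilX,x}$-module could have an arbitrary annihilator and its dimension over $k(x)$ would not be pinned down'' is false: the annihilator does not affect the fiber dimension of a nonzero cyclic module. Upgrading ``flat'' to ``free of rank one'' is a valid step and gives a slightly stronger conclusion (it identifies the graded pieces as free, which is one of the standard consequences of normal flatness for a ptm and is used elsewhere in the paper, e.g.\ in Lemma~\ref{nilprincipallem}), but it is not needed to compute $h_X(x)$. In fact the lemma as stated holds for any noetherian nilprincipal local ring, with no thick-manifold hypothesis — your argument, taken at face value, would obscure this.
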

\begin{proof}
Let $\calN$ be the nilradical, then $\calN_x=\veps\calO_x$ and $\calN^n/\calN^{n+1}$ is non-zero if and only if $\calN^n\neq 0$, that is $n<h$. In the latter case, $(\calN^n/\calN^{n+1})\otimes k(x)$ is generated by the image of $\veps^n$ and hence is of dimension 1.
\end{proof}

\begin{rem}
(i) Due to our definition, ptms of thickness 1 are just usual manifolds (or regular schemes) with dummy nilpotent parameter $\veps=0$. Because of this they should often be dealt with slightly differently, and since all results for them are standard we will simply exclude this case when this is convenient.

(ii) We will later enhance ptms to log schemes with the log structure generated by $\log(\veps)$. In this situation, the case of thickness 1 is not exceptional anymore, in particular, the corresponding log scheme is not log regular.
\end{rem}

\begin{lem}\label{cotangentlem}
If $X$ is a non-reduced ptm and $\veps,t_1\.t_d$ is a family of parameters at $x$, then their images form a basis of the cotangent space $m_x/m_x^2$.
\end{lem}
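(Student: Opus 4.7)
The plan is to pin down $\dim_{k(x)} m_x/m_x^2$ exactly by squeezing it between $d+1$ (from generation) and $d+1$ (from non-regularity); once equality holds, a spanning set of $d+1$ elements in a $(d+1)$-dimensional $k(x)$-space is automatically a basis.

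First I would establish spanning and the upper bound $\le d+1$. Since $\calO_{X^\red,x}$ is regular of dimension $d$ and, by the definition of a family of parameters, $\bar t_1,\dots,\bar t_d$ is a regular system of parameters there, their classes generate the maximal ideal $\bar m_x$ of the reduction. Lifting through the surjection $\calO_{X,x}\twoheadrightarrow\calO_{X^\red,x}$, whose kernel is $\calN=(\veps)$, I obtain $m_x=(\veps,t_1,\dots,t_d)$ as an ideal of $\calO_{X,x}$, so the images of $\veps,t_1,\dots,t_d$ span $m_x/m_x^2$ over $k(x)$.

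For the matching lower bound, Krull dimension is insensitive to killing nilpotents, so $\dim\calO_{X,x}=\dim\calO_{X^\red,x}=d$. In the non-reduced case at $x$ (which is what is relevant here, and is guaranteed by local constancy of the thickness function on a ptm, Lemma~\ref{thicklem}(ii), combined with the hypothesis that $X$ is non-reduced) one has $h_X(x)\geq 2$, so $\veps\neq 0$; in particular $\calO_{X,x}$ contains a nonzero nilpotent, is not a domain, and hence is not regular. The standard embedding-dimension inequality $\dim_{k(x)} m_x/m_x^2 \geq \dim\calO_{X,x}$ is therefore strict, yielding $\geq d+1$. Combining the two bounds gives $\dim_{k(x)} m_x/m_x^2 = d+1$, and the basis assertion follows.

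The only mildly delicate point is extracting the local non-triviality $\veps\neq 0$ at $x$ from the global non-reducedness of $X$, and this is exactly the purpose of Lemma~\ref{thicklem}(ii). Beyond that, the argument uses only routine dimension theory of local noetherian rings.
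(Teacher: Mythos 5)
Your proof is correct and follows the same route as the paper: use the $d+1$ generators to get spanning (hence $\dim m_x/m_x^2 \le d+1$), use non-regularity of the local ring (since $\dim\calO_{X,x}=d$ but $\calO_{X,x}$ has a nonzero nilpotent) to get the strict inequality $\dim m_x/m_x^2 > d$, and conclude by dimension count. Your added paragraph spelling out why $\veps\neq 0$ at $x$ via local constancy of the thickness function is a bit more explicit than the paper's terse ``Since $X$ is singular and of dimension $d$ at $x$,'' but the underlying argument is identical.
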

\begin{proof}
Since $X$ is singular and of dimension $d$ at $x$, we have $\dim(m_x/m_x^2)>d$. The $d+1$ parameters generate $m_x$, hence their images generate $m_x/m_x^2$ and by a dimension consideration they form a basis.
\end{proof}

\begin{lem}\label{thickmanlem}
Let $M$ be a regular scheme and $X\into M$ a closed subscheme given by an ideal $\calI\subset\calO_M$, $x\in X$, $\dim_x(M)=d$ and $\dim_x(X)=l$. Then $X$ is a non-reduced ptm of thickness $n$ locally at a point $x\in X$ if and only if there exists a family of local parameters $x_1\.x_d\in\calO_{M,x}$ such that $\calI_x=(x_1^n,x_2\.x_r)$ for $r=d-l$. In addition, if these conditions are satisfied and $(\veps,t_1\.t_l)$ is a family of parameters of $\calO_{X,x}$, then one can choose $x_1,x_{r+1}\.x_d$ to be any lift of $(\veps,t_1\.t_l)$.
\end{lem}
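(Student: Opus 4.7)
The ``if'' direction reduces to an explicit computation in the regular local ring $\calO_{M,x}$. Since a regular local ring is Cohen--Macaulay, the system of parameters $x_1^n, x_2, \ldots, x_d$ is a regular sequence, so $\dim\calO_{X,x}=d-r=l$. The image of $x_1$ is nilpotent and the quotient $\calO_{X,x}/(x_1)=\calO_{M,x}/(x_1,x_2\.x_r)$ is regular of dimension $l$, hence integral; therefore $(x_1)$ is the full nilradical and it is principal. For normal flatness, note that multiplication by $x_1^i$ induces an isomorphism $\calO_{X,x}^\red\xrightarrow{\sim}(x_1^i)/(x_1^{i+1})$ for $0\le i<n$, using that $x_1$ is a nonzerodivisor modulo $(x_2\.x_r)$ (permutability of regular sequences in a local ring). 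Thus each graded piece of the nilradical filtration is free of rank one, $X$ is a ptm, and its thickness equals $n$ by Lemma~\ref{thickness}.

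For the converse and the addendum, start with the ptm $X$ of thickness $n$, a family of parameters $(\veps,t_1\.t_l)$, and arbitrary lifts $x_1,x_{r+1}\.x_d\in\calO_{M,x}$. By Lemma~\ref{cotangentlem}, the images of $\veps,t_1\.t_l$ form a basis of $m_x/m_x^2=\gm/(\gm^2+\calI_x)$, so the kernel $(\calI_x+\gm^2)/\gm^2$ of $\gm/\gm^2\twoheadrightarrow m_x/m_x^2$ has dimension $d-(l+1)=r-1$. Pick $x_2\.x_r\in\calI_x\cap\gm$ whose images form a basis of this kernel; then $x_1\.x_d$ is a regular system of parameters of $\calO_{M,x}$. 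The inclusion $J:=(x_1^n,x_2\.x_r)\subseteq\calI_x$ is then immediate: $x_i\in\calI_x$ for $2\le i\le r$ and $x_1^n\equiv\veps^n=0$ in $\calO_{X,x}$ by Lemma~\ref{thickness}.

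The main step is to upgrade this to the equality $J=\calI_x$. Work in the regular local ring $A:=\calO_{M,x}/(x_2\.x_r)$ of dimension $l+1$ with regular parameters (the images of) $x_1,x_{r+1}\.x_d$. The inclusion $J\subseteq\calI_x$ becomes a surjection $R:=A/(x_1^n)\twoheadrightarrow S:=\calO_{X,x}=A/\bar\calI$, and $\bar\calI\subseteq(x_1)A$ because $S^\red=A/\sqrt{\bar\calI}$ is a regular quotient of $A/(x_1)$ of the same dimension $l$, forcing $\sqrt{\bar\calI}=(x_1)A$ (any surjection between regular local rings of the same dimension is an isomorphism). By the ``if'' direction $R$ is a ptm of thickness $n$, and $S$ is one by hypothesis; both have nilradical filtrations of length exactly $n$ whose graded pieces are free of rank one over the common reduction $A/(x_1)$. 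The surjection $R\twoheadrightarrow S$ respects these filtrations and induces a surjection on each graded piece between rank-one free modules over a local ring, which is automatically an isomorphism. If $K$ denotes the kernel of $R\to S$, then $K\subseteq\calN_R$ (since reductions agree), and the isomorphism on graded pieces forces $K\subseteq\calN_R^{j+1}$ whenever $K\subseteq\calN_R^j$; iterating gives $K\subseteq\calN_R^n=0$, whence $J=\calI_x$. The only real obstacle is this last step, which is where the ptm hypothesis (rather than a weaker non-reducedness) is essential, as it furnishes the canonical length-$n$ filtration by rank-one free modules over the reduction.
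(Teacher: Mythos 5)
Your proof is correct and follows essentially the same route as the paper: extend lifts of $(\veps,t_1\.t_l)$ to a regular system of parameters with $x_2\.x_r\in\calI_x$, observe the inclusion $(x_1^n,x_2\.x_r)\subseteq\calI_x$, and then show the induced surjection of local rings is an isomorphism. The paper compresses that last step into one sentence (kernel is nilpotent, thickness $n$ on both sides forces it to vanish); you unfold it into the explicit filtration-by-$\calN^j$ argument, and you also spell out the ``if'' direction that the paper calls obvious, but the underlying ideas are the same.
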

\begin{proof}
The inverse implication is obvious since $X^\red$ is locally given by the vanishing of $(x_1\.x_r)$, and the nilradical of $\calO_{X,x}$ is generated by the image of $x_1$.

Conversely, assume that $X$ is a ptm and fix any family $(x_1,x_{r+1}\.x_d)$ of elements of $\calO_{M,x}$ that reduces to a family of parameters $(\veps,t_1\.t_l)$ at $x$. The images of $(x_1,x_{r+1}\.x_d)$ in $m_{M,x}/m_{M,x}^2$ are linearly independent because their images in $m_{X,x}/m_{X,x}^2$ form a basis by Lemma~\ref{cotangentlem}. The kernel of the map $m_{M,x}/m_{M,x}^2\onto m_{X,x}/m_{X,x}^2$ is the image of $\calI_x$, hence the family $(x_1,x_{r+1}\.x_d)$ can be completed to a family of regular parameters $(x_1\.x_d)$ so that $x_2\.x_r\in\calI_x$.

Let $n$ be the thickness of $X$ at $x$. Then locally at $x$ the ideal $\calI'=(x_1^n,x_2\.x_r)$ is contained in $\calI$, hence defines a ptm $X'$ containing $X$. We claim that the surjection $\phi\:\calO_{X',x}\onto\calO_{X,x}$ is an isomorphism. Indeed, any element $a\in\Ker(\phi)$ is nilpotent because $X^\red$ and $X'^\red$ are smooth of the same dimension $l=d-r$ at $x$, and using that $X$ and $X'$ are of thickness $n$ we obtain that $a=0$.
\end{proof}

We say that a ptm (or a scheme) $X$ is {\em locally embeddable (in a regular scheme)} if any point $x\in X$ has a neighborhood isomorphic to a closed subscheme of a regular scheme $M$. For example, any $k$-variety satisfies this property. The above lemma implies that locally embeddable ptms are isomorphic to a divisor $nD$ with a regular $D$ in an ambient regular scheme:

\begin{cor}\label{thickmancor}
If $X$ is a locally embeddable ptm, then any point $x\in X$ possesses a neighborhood $U$ and a closed immersion $U\into M$, such that $M$ is regular and $U$ is the divisor in $M$ of the form $V(t^n)$, where $t$ is a parameter of $\calO_{M,x}$ and $n$ is the thickness of $X$ at $x$.
\end{cor}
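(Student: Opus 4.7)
The plan is to reduce to the local embedding provided by hypothesis and then cut the ambient scheme down in order to get rid of all of the defining equations except the one involving nilpotent powers. By assumption, the point $x$ has a neighborhood $V\subseteq X$ admitting a closed immersion $V\into M_0$ into a regular scheme $M_0$; shrinking $M_0$ we may assume it is local at $x$ (or simply work in a sufficiently small affine neighborhood in which all relevant parameters are defined as global sections). Let $d=\dim_x(M_0)$, $l=\dim_x(X)$, and $r=d-l$, and let $n$ be the thickness of $X$ at $x$.

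Applying Lemma~\ref{thickmanlem} to the embedding $V\into M_0$ yields a family of regular parameters $x_1\.x_d\in\calO_{M_0,x}$ such that the ideal of $X$ in $\calO_{M_0,x}$ equals $(x_1^n,x_2\.x_r)$. After shrinking $M_0$ further we may assume these are global sections and that $x_1\.x_d$ still form a system of regular parameters at every point where they are all zero near $x$. Define the closed subscheme $M\subset M_0$ by $M=V(x_2\.x_r)$. Since $x_2\.x_r$ form part of a regular system of parameters at $x$, they form a regular sequence there, and $M$ is a regular closed subscheme of $M_0$ of dimension $l+1$ in a neighborhood of $x$; after one more shrinking we may assume $M$ is regular everywhere it meets the picture. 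Let $t$ be the image of $x_1$ in $\calO_M$; by construction $t$ is a regular parameter of $\calO_{M,x}$.

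It remains to identify $X$ with $V(t^n)\subset M$ near $x$. The ideal sheaf of $V$ inside $M_0$ was $(x_1^n,x_2\.x_r)$, which modulo $(x_2\.x_r)$ becomes $(t^n)$; therefore the scheme-theoretic intersection $V\cap M$ coincides with the divisor $V(t^n)$ in $M$. On the other hand, since $V$ is already contained in the vanishing locus of $x_2\.x_r$ (they lie in its ideal of definition), we have $V\cap M=V$ in a neighborhood of $x$. Taking $U$ to be this neighborhood inside $X$, we obtain the desired closed immersion $U\into M$ realizing $U$ as the divisor $V(t^n)$. The only mild technical point is the routine shrinking needed to ensure that the chosen parameters are defined globally on $M_0$ and that $M$ remains regular on the open set we retain; this causes no real difficulty.
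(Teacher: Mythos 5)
Your proof is correct and follows essentially the same route as the paper: apply Lemma~\ref{thickmanlem} to get local parameters with $\calI_X=(x_1^n,x_2\.x_r)$, then pass to the regular closed subscheme $M=V(x_2\.x_r)$ and take $t$ to be the image of $x_1$, so that $U$ becomes $V(t^n)$ inside $M$. The extra remarks about shrinking are routine and only add detail to what the paper leaves implicit.
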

\begin{proof}
By Lemma~\ref{thickmanlem}, $x$ has a neighborhood $U$ isomorphic to a closed subscheme $V(t_1^n,t_2\.t_r)$ of a regular scheme $M'$, where $t_1\.t_d$ is a regular family of parameters of $\calO_{M',x}$. It remains to take $M=V(t_2\.t_r)$ and define $t\in\calO_{M,x}$ to be the image of $t_1$.
\end{proof}

The following corollary certainly holds for arbitrary ptms, but we only need the case of locally embeddable ones and then the argument is very simple.

\begin{cor}\label{CMcor}
Any ptm $X$, which is locally embeddable in a regular scheme, is Cohen-Macaulay. In particular, if $i\:U\into X$ is an open immersion, whose complement is of codimension at least 2, then $\calO_X=i_*\calO_U$.
\end{cor}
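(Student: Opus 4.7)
The plan is to reduce to the local model provided by Corollary~\ref{thickmancor} and then invoke standard commutative algebra. By that corollary, any point $x\in X$ has a neighborhood $U$ isomorphic to the closed subscheme $V(t^n)$ of a regular scheme $M$, where $t$ is part of a regular system of parameters of $\calO_{M,x}$. Since $t$ is a non-zero-divisor in the regular local ring $\calO_{M,x}$, so is $t^n$, and hence $\calO_{X,x}\cong\calO_{M,x}/(t^n)$ is the quotient of a Cohen--Macaulay ring by a single non-zero-divisor. Standard commutative algebra then gives that $\calO_{X,x}$ is Cohen--Macaulay of dimension $\dim\calO_{M,x}-1$. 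As this holds at every point, $X$ is Cohen--Macaulay.

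For the second assertion, let $Z=X\setminus U$, so $Z$ is a closed subscheme of codimension at least $2$ in $X$. From the long exact sequence of local cohomology one obtains
\[
0\to H^0_Z(\calO_X)\to\calO_X\to i_*\calO_U\to H^1_Z(\calO_X)\to 0,
\]
so it suffices to show that $H^j_Z(\calO_X)=0$ for $j=0,1$. Since $X$ is Cohen--Macaulay, it satisfies Serre's condition $S_2$: for every $z\in Z$ one has $\depth\calO_{X,z}\ge\min(\dim\calO_{X,z},2)\ge 2$, the last inequality using $\codim(Z,X)\ge 2$. By the standard vanishing of local cohomology (e.g.\ Grothendieck's theorem relating depth to local cohomology), $\depth_{Z}(\calO_X)\ge 2$ implies $H^j_Z(\calO_X)=0$ for $j<2$. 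Hence $\calO_X\to i_*\calO_U$ is an isomorphism.

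There is essentially no serious obstacle: the only point that requires mild care is the passage from the pointwise local description to the global Cohen--Macaulay statement, which is automatic since being Cohen--Macaulay is a local property, and the verification of $\depth\ge 2$ along $Z$, which uses nothing beyond the codimension hypothesis and the Cohen--Macaulay property already established.
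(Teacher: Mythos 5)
Your argument is correct and follows the same route as the paper: reduce to the local model $V(t^n)\subset M$ from Corollary~\ref{thickmancor} to get Cohen--Macaulayness, and then deduce the extension statement from the $S_2$ property. The only difference is that for the second claim the paper simply cites the $Z$-purity theorem (EGA IV.5.10.5, Hassett--Kov\'acs) whereas you supply the standard local-cohomology proof of it; that is the same mathematics, just spelled out.
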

\begin{proof}
By Corollary \ref{thickmancor}, locally $X$ is a Cartier divisor in a smooth (hence CM) variety. Therefore, $X$ is itself CM. The second claim is, perhaps, better known for normal schemes, but it holds more generally for $S_2$-schemes and, in fact, provides an equivalent characterization of this property. The original source is the theory of $Z$-purity, see \cite[Th\'eor\`eme~IV.5.10.5]{ega}, a modern treatment (in the relative setting) is in \cite[Proposition~3.5]{Hassett-Kovacs}.
\end{proof}

\subsubsection{Other characterizations}
For the sake of completeness we provide alternative characterizations of ptms, though this will not be used in the sequel.

\begin{lem}\label{nilprincipallem}
Given a scheme $X$ the following conditions are equivalent:

(i) $X$ is a ptm.

(ii) $X$ is a generically nilprincipal thick manifold.

(iii) $X^\red$ is regular, $X$ is nilprincipal and without embedded components.
\end{lem}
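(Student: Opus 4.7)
My plan is to prove (i)$\Rightarrow$(ii), (i)$\Rightarrow$(iii), (ii)$\Rightarrow$(i), and (iii)$\Rightarrow$(i). The first two are formal: (i)$\Rightarrow$(ii) holds since local principality at all stalks includes principality at the generic stalks, while (i)$\Rightarrow$(iii) uses that a ptm has by definition a regular reduction and locally principal nilradical, together with Lemma~\ref{thicklem}(ii), which ensures that a thick manifold has no embedded components.

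For (ii)$\Rightarrow$(i), the only new content to establish is local principality of $\calN$. Normal flatness of $X$ along $X^\red$ makes the conormal module $\calN/\calN^2$ flat over $\calO_{X^\red}$, hence locally free since it is also coherent over a locally noetherian scheme. Regularity of $X^\red$ forces its local rings to be domains, so locally $X^\red$ is integral with a unique generic point $\eta$; by hypothesis $\calN_\eta$ is principal in the Artin local ring $\calO_{X,\eta}$, so $(\calN/\calN^2)_\eta$ has dimension $0$ or $1$. By local freeness this is also the rank of $\calN/\calN^2$ throughout the connected component, and Nakayama's lemma then shows that $\calN$ is locally generated by at most one element.

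For (iii)$\Rightarrow$(i), the content to verify is normal flatness. Fix $x\in X$, set $R=\calO_{X,x}$, and choose a local generator $\veps$ of $\calN_x$. Regularity of $X^\red$ makes $R^\red=R/(\veps)$ a regular local ring and thus a domain, so $R$ has a unique minimal prime, which must be $(\veps)$. Since $X$ has no embedded components, $(\veps)$ is also the only associated prime of $R$, so the set of zero-divisors of $R$ coincides with $(\veps)$. Hence for any $n$ with $\veps^n\neq 0$, every $a$ satisfying $a\veps^n=0$ lies in $(\veps)$, giving $\text{Ann}_R(\veps^n)\subseteq(\veps)$. The annihilator in $R^\red$ of the cyclic module $\calN^n/\calN^{n+1}$ (generated by the image of $\veps^n$) equals the image of $\text{Ann}_R(\veps^n)+(\veps)$, which is therefore zero; so $\calN^n/\calN^{n+1}\cong R^\red$ is free, in particular flat.

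The main technical point of the proof is the use in this last step of the regularity of $X^\red$—rather than merely its connectedness—to force a single minimal prime at each stalk of $R$. Without local irreducibility, the hypothesis of no embedded components would not be enough to identify the zero-divisors of $R$ with the nilradical, and the clean cyclic-module/Nakayama computation needed for normal flatness would fail.
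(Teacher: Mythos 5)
Your proof is correct and follows essentially the same route as the paper's: (i)$\Rightarrow$(ii),(iii) are formal via Lemma~\ref{thicklem}(ii); (ii)$\Rightarrow$(i) uses local freeness of $\calN/\calN^2$ plus Nakayama; and (iii)$\Rightarrow$(i) reduces to showing each $\calN^i/\calN^{i+1}$ is free of rank one by controlling the annihilator of $\veps^i$ via the absence of embedded primes. Your treatment of (iii)$\Rightarrow$(i) is a bit more explicit than the paper's (you invoke the characterization of zero-divisors as the union of associated primes, identify $(\veps)$ as the unique minimal prime from the regularity/integrality of $R^{\rm red}$, and verify freeness for each exponent directly rather than reducing to $i=n-1$), but the underlying argument is the same.
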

\begin{proof}
(i) implies (ii) and (iii) because thick manifolds have no embedded components by Lemma~\ref{thicklem}(ii). In the sequel, let $\calN$ denote the nilradical of $\calO_X$.

(ii)$\Longrightarrow$(i) Looking at the generic points we see that the locally free module $\calN/\calN^2$ is of rank 1. Its local generator at a point $x$ lifts to a local generator of the stalk $\calN_x$ and hence $X$ is nilprincipal.

(iii)$\Longrightarrow$(i) We will work locally at $x\in X$. Let $\veps\in\calO_x$ be a generator of $\calN_x$ and let $n$ be the minimal number with $\veps^n=0$. We should prove that $X$ is normally flat along $X^\red$ at $x$ and we claim that, in fact, each $\calN_x^i/\calN_x^{i+1}$ with $i<n$ is free of rank 1 over $\calO_{X^\red,x}=\calO_{X,x}/\calN_x$. Clearly, $\calN_x^i/\calN_x^{i+1}$ is generated by the image of $\veps^i$, so we should prove that its annihilator in $\calO_{X,x}/\calN_x$ is zero. Moreover, it suffices to prove the latter only for $i=n-1$, and since $\calN_x^n=0$ this happens if and only if the annihilator of $\veps^{n-1}$ in $\calO_{X,x}$ is contained in (in fact, equal to) $\calN_x$. The latter holds because otherwise $X$ would contain an embedded component on whose complement $\veps^{n-1}$ vanishes.
\end{proof}

\begin{rem}
Note that $X$ is generically nilprincipal if and only if for any generic point $\eta\in X$ the Artin ring $\calO_\eta$ has principal maximal ideal $m_\eta=(\veps)$. Such Artin rings are very special, e.g. see \cite[Proposition~8.8]{AM}, in addition, they can be described as quotients of DVR's by non-zero ideals.
\end{rem}

\subsection{Desingularization of generically nilprincipal varieties}\label{desingsec}
Now we are ready to formulate the resolution result for non-reduced schemes with simplest nilpotent structure.

\begin{theor}\label{nonembeddedth}
There exists a construction which associates to any generically nilprincipal variety $X$ of characteristic zero a projective desingularization $\calF(X)=X'\to X$ which depends on $X$ smooth-functorially: for any smooth morphism $Y\to X$ one has that $\calF(Y)=\calF(X)\times_XY$.
\end{theor}
\begin{proof}
Step 1. {\em Assume that $X$ is irreducible and embedded as a closed subscheme in a smooth variety $M$.} In this case we apply the smooth-functorial principalization from \cite{Wlodarczyk} to $\calI_X\subset\calO_M$ (one can use also the methods of \cite{Kollar} or \cite{Bierstone-Milman-funct}, but they all are equivalent). It outputs a sequence of blowings up $f_i\:M_i\to M_{i-1}$, $1\le i\le n$ with smooth centers $V_i\subset M_i$ and snc boundaries $E_i\subset M_i$ such that $E_0=\emptyset$, $M_0=M$, $E_i=f_i^{-1}(E_{i-1})\cup f_i^{-1}(V_{i-1})$, $V_i$ has simple normal crossings with $E_i$ and lies in the preimage of $X$, and $\calI_X\calO_{M_n}$ is invertible and supported on $E_n$. As one always does when deducing resolution from principalization, we cut this sequence at $M_l$ such that $f_{l+1}$ is the first blowing up whose center $V_l$ maps onto $X$; it exists because the preimage of $X$ lies in $E_n$. Then the strict transform $X_l\subset M_l$ of $X$ (i.e. the schematic closure of $X\times_MM_l\setminus E_l$) is non-empty, lies in $V_l$ and contains an irreducible component of $V_l$. Since $V_l$ is smooth, $X^\red_l$ is an irreducible component of $V_l$, in particular, it is smooth. So far, we repeated the standard argument, which constructs the resolution $X_l^\red\to X^\red$ of the integral scheme $X^\red$, but we claim that, moreover, $X_l\to X$ is a resolution of $X$.

Locally at the generic point $\eta\in X$ the principalization works as follows: there is no boundary and whenever $\dim(M)>\dim(X)+1$ the order is one by Lemma~\ref{thickmanlem}, so one simply replaces $M$ by a maximal contact -- any smooth divisor containing $X$. Once $\dim(M)=\dim(X)+1$, locally at $\eta$ one has that $\calI_X=(t^n)$, where $n$ is the thickness of $X$ and $H=V(t)$ is the reduction of $X$. It follows that the order is $n$, the only maximal contact is $H$ and the algorithm simply resolves the marked ideal $(\calI_X,n)$ by blowing up $H$. So, generically on $X$ there are $r=\dim(M)-\dim(X)$ non-trivial steps: for the first $r-1$ steps the algorithm reduces the dimension of $M$ and then blows up $H=\tilX$. By our assumption the latter blowing up number is $l+1$ (typically, the principalization of $X$ also involves steps which restrict to empty blowings up in a neighborhood of $\eta$ and are not counted in the above description). We will prove by computing the orders that at this stage the whole $X$ is a ptm, that is, $X_l$ is a ptm.

Looking locally at $\eta$ we see that this blowing up happens when there is no new boundary (we are in Step 1b of the algorithm in \cite{Wlodarczyk}), $\dim(M_l)=\dim(X_l)+1$ and the order of $\calI_{X_l}$ at $x$ is $n$ because the principalization center at a no boundary step is supported on the locus of maximal order and hence the order of $\calI_{X_l}$ equals $n$ everywhere on $H$. Note that $nH$ is the schematic closure of the generic point of ${X_l}$, hence $nH\into {X_l}$ and $\calI_{X_l}$ is contained in the divisorial ideal $\calI_H^n$. In particular, $\calI_H^{-n}\calI_{X_l}$ is an ideal, and since the degrees of $\calI_H^n$ and $\calI_{X_l}$ are equal to $n$ everywhere along $H$, the order of $\calI_H^{-n}\calI_{X_l}$ is zero, that is, $X_l=nH$. So, as claimed $X_l$ is a principally thick manifold and $X_l\to X$ is a projective desingularization.

For the sake of completeness, we note that the blowing up $X_{l+1}\to X_l$ is the last blowing up of the principalization, it happens at Step 1ba in \cite{Wlodarczyk}, and the invariant at this step is $(1,0,1,0\.1,0,n,0,\infty)$.

Step 2. {\it Assume that $X$ is irreducible.} In this case one locally embeds $X$ into a smooth variety $M$ and takes the resolution $X_l\to X$ provided by Step 1. A standard argument used in all works on functorial resolution shows that this is independent of the local choice of embeddings and globalizes to a smooth-functorial method to resolve $X$ -- usually one works with an integral $X$ because principalization does not provide resolution of general non-reduced varieties, but this reduction only uses that $X$ is irreducible.

Step 3. {\it The general case.} In general, let $\eta_1\.\eta_s$ be the generic points of $X$ with the induced scheme structure and let $X_i$ be the schematic closure of $\eta_i$. Then $\coprod_iX_i\to X$ is a modification, which depends on $X$ in a way which is easily seen to be compatible with smooth morphisms $X'\to X$. If $Y_i\to X_i$ are the resolutions provided by Step 2, then $\calF(X)=\coprod_iY_i$ is a resolution of $X$, and by the above $\calF$ is smooth-functorial.
\end{proof}

\section{Resolution on ptms}\label{onptm}
In this section we will prove our main result -- a version of semistable reduction over closed subschemes of a trait (i.e. zero dimensional schemes with principal radical). In the sequel, given a ptm $X$ we will use the notation $\tilX=X^\red$ to denote its reduction.

\subsection{Principalization on ptms}\label{princsec}
In this section we construct principalization of ideals on a ptm using the usual principalization on its reduction.

\subsubsection{Blowings up}
Principalization is achieved by iteratively blowing up regular centers.

\begin{lem}\label{blowuplem}
Assume that $X$ is a ptm and $V\into X$ a regular center. Then $X'=\Bl_V(X)$ is a ptm too, and the reduction of $X'$ is the blowing up of the reduction of $X$ along $V$, that is, $\tilX'=\Bl_V(\tilX)$.
\end{lem}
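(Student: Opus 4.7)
Working locally near $x \in V$, Corollary~\ref{thickmancor} lets me embed $X \into M$ as a closed subscheme of a regular scheme with $X = V(t^n)$, where $t \in \calO_M$ is a parameter and $n$ is the thickness at $x$. Since $V$ is regular and hence reduced, $V \into \tilX = V(t)$, and because both are regular the immersion is locally cut out in $\tilX$ by a regular sequence $s_1\.s_r$ extending to a regular system of parameters of $\tilX$. Lifting to $M$, $V = V(t, s_1\.s_r)$ with $(t, s_1\.s_r)$ part of a regular system of parameters of $M$, so $V\into M$ is a regular closed immersion and $M' = \Bl_V(M)$ is regular.

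On each of the $r+1$ standard affine charts of $M'$ I would compute the scheme-theoretic strict transform of $X$. The $t$-chart contributes nothing because on it the exceptional divisor is $V(t)$ while $t^n = 0$ on $X$. On the $s_i$-chart, with coordinates $t' = t/s_i$, $s_i$, $s_j' = s_j/s_i$ ($j \neq i$), and the remaining parameters of $M$, the ring $\calO_{M'}$ is regular with $t'$ a parameter; the total transform $V(t^n)\cdot\calO_{M'} = V(s_i^n t'^n)$ splits into the exceptional part $V(s_i^n)$ and the strict transform $S_i := V(t'^n)$. By Lemma~\ref{thickmanlem}, $S_i$ is a ptm of thickness $n$ with reduction the regular divisor $V(t')$.

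It remains to identify $X' = \Bl_V(X)$ with $S := \bigcup_i S_i \subset M'$. The universal property of blowing up gives a closed immersion $X' \into M'$ over $M$ (using Cartier-ness of the exceptional divisor of $X'\to X$); conversely, $\calI_X\calO_S = 0$ on each chart and $\calI_V\calO_S = (s_i)\calO_S$ is Cartier, so $S \to M$ factors through $X$ and the resulting $S \to X$ factors through $X'$. Both maps agree on the schematically dense open $X \setminus V$, and $S$ has no embedded components by Lemma~\ref{thicklem}(ii), so the pair are mutually inverse isomorphisms. Running the same chart computation with $\tilX = V(t)$ in place of $X$ identifies the reduction of $S_i$ with the $s_i$-chart of $\Bl_V(\tilX)$; gluing the charts, $X'$ is a ptm with $\tilX' = \Bl_V(\tilX)$.

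The main subtlety is the identification of the scheme-theoretic blowup of the non-reduced $X$ with the strict transform in $M'$, which would fail in general but works here because the absence of embedded components on the ptm $S$ forces any closed subscheme agreeing with $S$ on a schematically dense open to coincide with $S$. The degenerate case $r=0$, i.e.\ $V=\tilX$, is handled vacuously: both $\Bl_V(X)$ and $\Bl_V(\tilX)$ are empty since the irrelevant ideal of the Rees algebra becomes nilpotent.
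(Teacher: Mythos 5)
Your proof takes a genuinely different route from the paper's. The paper computes the charts of $\Bl_V(X)$ directly and intrinsically: choosing parameters $\veps,t_1\.t_n$ of $\calO_{X,x}$ with $V=V(\veps,t_1\.t_r)$, it observes that the $\veps$-chart is empty and that each $t_i$-chart is $\Spec\bigl(A[\veps/t_i,t_1/t_i\.t_r/t_i]\bigr)\subseteq\Spec(A_{t_i})$ with nilradical generated by $\veps/t_i$. You instead embed $X=V(t^n)$ into an ambient regular $M$, blow up $M$, and identify $\Bl_V(X)$ with the strict transform of $X$ in $\Bl_V(M)$. The two chart computations are essentially the same modulo the extra ambient layer (your $t,s_i$ are the paper's $\veps,t_i$), but your version is more visual: you literally see the reduction being blown up inside $\Bl_V(M)$. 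Still, there are two costs worth naming.

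First, you invoke Corollary~\ref{thickmancor}, which requires $X$ to be locally embeddable in a regular scheme; that hypothesis is not part of Lemma~\ref{blowuplem} as stated, and the paper's intrinsic chart computation avoids it entirely. (The hypothesis does hold in all the paper's applications, since there $X$ is a variety, so this is a loss of generality rather than an error.)

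Second, your identification $X'\cong S$ is incompletely argued. What you actually establish with the ``two maps agree'' step is that the composition $S\to X'\into M'$ coincides with the closed immersion $S\into M'$, because $X\setminus V$ is schematically dense in $S$ (no embedded components of $S$) and $M'$ is separated. That gives $S\subseteq X'$. For the reverse inclusion $X'\subseteq S$ you need that $X\setminus V$ is also schematically dense in $X'$, which follows because the exceptional divisor of $X'\to X$ is a Cartier divisor and hence contains no associated point of $X'$; you mention the Cartier-ness but only use it to produce the map $X'\to M'$, not to close the loop. Alternatively you could simply cite the standard fact that the strict transform of a closed subscheme $X\supseteq V$ under $\Bl_V(M)\to M$ is $\Bl_V(X)$ (Hartshorne II.7.15, or the Stacks Project), and dispense with the back-and-forth. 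With one of these patches the argument is correct.
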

\begin{proof}
The second claim follows from the fact that $\tilX'$ is the strict transform of the closed subscheme $\tilX$ of $X$, and hence by the classical properties of blowings up $\tilX'$ is the blowing up of $\tilX$ along $V\times_X\tilX=V$. In particular, $\tilX'$ is regular.

It remains to check that $X'$ is a ptm, and this is a local claim over a point $x\in V$. So, we can assume that $X=\Spec(A)$ is a local scheme with closed point $x$. Choose local parameters $\veps,t_1\.t_n$ at $x$, such that $V=V(\veps,t_1\.t_r)$. Then the $\veps$-chart is empty, so $X'$ is covered by the charts $X'_i=\Spec(A_i)$, with $A_i=A[\veps/t_i,t_1/t_i\.t_r/t_i]\subseteq A_{t_i}$, and it is easy to see that $\veps'=\veps/t_i$ generates the nilradical of $A_i$. Thus, the nilradical of $X'$ is locally principal and the reduction $\tilX'$ is regular, that is, $X'$ is a ptm.
\end{proof}

\subsubsection{Snc divisors}\label{boundary}
Working on a ptm $X$ one has to distinguish Weil and Cartier divisors. Weil divisors are just divisors on the reduction, while Cartier divisors contain infinitesimal information as well. Saying a ``divisor'' on $X$ we will always mean an effective Cartier divisor $D=V(f)\subset X$. A divisor $D$ will be called {\em snc} if locally on $X$ it factors as $D=\sum_i D_i$ such that each reduction $\tilD_i=D_i\times_X\tilX$ is smooth and $\tilD=\cup_i\tilD_i$ is an snc divisor on $\tilX$. A divisor $D'$ is $D$-monomial if it is of the form $D'=\sum_i n_iD_i$. A regular closed subscheme $V\into X$ has {\em simple normal crossings with} $D$ if it has simple normal crossings with $\tilD$ as a subscheme of $\tilX$.

\begin{rem}\label{sncrem}
If $D$ is a divisor with an snc reduction $\tilD=\cup_iD_i$, then it can freely happen that $D$ is not snc because the factorization of $\tilD$, does not lift to a factorization of $D$, and the same is true for monomiality. For example, if $X=\Spec(k[x,y,\veps]/(\veps^2))$ then $D=V(xy+\veps)$ is not snc, though $\tilD$ is snc, and $D'=V(y^2+\veps)$ is not $V(y)$-monomial, though $\tilD'$ is $V(y)$-monomial. In addition, there exists different snc divisors with the same reduction, for example, $(x)$ and $(x+\veps)$.
\end{rem}

In the sequel, by a {\em ptm with a boundary} we mean a ptm $X$ and an snc divisor $E\into X$, also called a {\em boundary}. A boundary $E$ is called {\em ordered} if it is provided with a decomposition $E=\sum_{i=1}^lE_i$ into components (not necessarily connected) with smooth reductions $\tilE_i$.

\subsubsection{Addmissible blowings up and transforms}
Assume that $(X,E)$ is a ptm with a boundary and $\calI\subseteq\calO_X$ is an ideal corresponding to a closed subscheme $Z=V(\calI)$, then a {\em $Z$-admissible blowing up} is a blowing up $f\:X'=\Bl_V(X)\to X$, where $V$ is regular, has snc with $E$ and is contained in $Z$. Since $\calI\subseteq\calI_V$ and the exceptional divisor $E_f=V\times_XX'$ is a Cartier divisor, the {\em principal transform} $\calI'=(\calI\calO_{X'})\calI_{E_f}^{-1}$ is defined. It corresponds to the subscheme $Z'$ of $X'$ that we also denote $Z\times_XX'-E_f$ and call the {\em principal transform} of $Z$ under $f$.

In addition, if $E=\sum_{i=1}^l E_i$ is ordered, then for each $i$ we set $E'_i=E_i\times_XX'-V_i\times_XX'$, where $V_i$ is the union of all components of $V$ contained in $E_i$ (and so $V_i\times_XX'$ is the component of $E_f$ contained in $E_i\times_XX'$). Finally, we set $E'_{l+1}=E_f$ and define the total transform of $E$ to be the ordered boundary $E'=\sum_{i=1}^{l+1}E'_i$. Since $E'$ is independent of the ordering of $E$ and an ordering can be chosen locally, we also obtain a definition of total transform for unordered boundaries.

When the thickness is 1 these definitions are nothing but the classical ones from the principalization theory on regular schemes. Moreover, they are compatible with the reduction:

\begin{lem}\label{transformlem}
Assume that $(X,E)$ is a ptm with an (ordered) boundary, $Z\into X$ is a closed subscheme and $f\:X'\to X$ is a $Z$-admissible blowing up with center $V\into Z$, and let $Z'$ and $E'$ be the transforms. Also, let $\tilZ=Z\times_X\tilX$ and $\tilE=E\times_X\tilX$, and let $\tilZ'$ and $\tilE'$ be their transforms under the blowing up of $\tilX$ along $V$. Then, $E'$ is an snc divisor in $X'$, $\tilZ'=Z'\times_{X'}\tilX'$ and $\tilE'=E'\times_{X'}\tilX'$.
\end{lem}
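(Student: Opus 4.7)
The plan is to reduce the three assertions to ideal-theoretic base change computations. By Lemma~\ref{blowuplem}, $X'$ is a ptm with reduction $\tilX' = \Bl_V(\tilX)$, where we implicitly use that $V$ is regular, hence reduced, hence contained in $\tilX$. Although the square of closed immersions $\tilX' \into X'$ and $\tilX \into X$ is \emph{not} cartesian, the induced commutative square of structure sheaves still guarantees that the pullback to $\tilX'$ of any ideal coming from $X$ can be computed equivalently through $X'$ or through $\tilX$. This is the single formal fact that powers the entire argument.

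First I would prove $\tilE' = E' \times_{X'} \tilX'$ summand by summand. The crucial geometric input is $V_i \subseteq V \subseteq \tilX$. For $i \le l$, the ideal of $E_i'$ on $X'$ is $(\calI_{E_i} \calO_{X'}) \cdot (\calI_{V_i} \calO_{X'})^{-1}$; restricting to $\tilX'$ yields $(\calI_{\tilE_i} \calO_{\tilX'}) \cdot (\calI_{V_i} \calO_{\tilX'})^{-1}$, which is the ideal of $\tilE_i'$. For $i = l+1$, the ideal of $E_f$ on $X'$ is $\calI_V \calO_{X'}$, whose restriction to $\tilX'$ is $\calI_V \calO_{\tilX'}$, the ideal of the exceptional divisor $E_{\tilde f}$ of $\Bl_V(\tilX)$. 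Summing over $i$ gives the identity.

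Next I would derive $\tilZ' = Z' \times_{X'} \tilX'$ by the very same mechanism: the ideal $\calI_{Z'} = (\calI_Z \calO_{X'}) \calI_{E_f}^{-1}$, restricted to $\tilX'$, becomes $(\calI_{\tilZ} \calO_{\tilX'}) \calI_{E_{\tilde f}}^{-1}$ using the previous identification of exceptional divisors, and this is by definition the ideal of $\tilZ'$.

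Finally, the snc claim follows formally from the two identities already proved. They exhibit each $\tilE_i' = E_i' \times_{X'} \tilX'$ as either the strict transform of $\tilE_i$ (for $i \le l$) or the exceptional divisor of $\Bl_V(\tilX)$ (for $i = l+1$), all on the regular scheme $\tilX'$, arising from a blowup along the smooth center $V$ which has snc with $\tilE$. Classical resolution theory then ensures that each $\tilE_i'$ is smooth and that $\tilE' = \bigcup_i \tilE_i'$ is snc on $\tilX'$, which is by the definition of an snc divisor on a ptm exactly what is required for $E' = \sum_i E_i'$. The only real obstacle throughout is mild bookkeeping --- verifying that the Cartier divisors entering the transform formulas are supported inside $\tilX$, so that their pullbacks commute with restriction to $\tilX'$; once this is observed, everything reduces formally to classical facts on the smooth scheme $\tilX$.
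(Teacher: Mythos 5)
Your proposal is correct and follows essentially the same route as the paper: use Lemma~\ref{blowuplem} to identify $\tilX'$ with $\Bl_V(\tilX)$, check that each piece of the transform (exceptional divisor, transforms of $Z$ and of the $E_i$'s) pulls back correctly to $\tilX'$ because the morphism $\tilX'\to X$ is the same through $X'$ or through $\tilX$, and then deduce snc of $E'$ from the classical fact that $\tilE'$ is snc. The one point worth flagging is the implicit use that the pulled-back invertible ideals $\calI_{V_i}\calO_{\tilX'}$ and $\calI_{E_f}\calO_{\tilX'}$ remain non-zero-divisors (so dividing commutes with restricting to $\tilX'$); this holds since the exceptional divisor contains no component of the strict transform $\tilX'$, and the paper leaves the same step implicit.
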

\begin{proof}
Recall that by Lemma \ref{blowuplem} $\tilf\:\tilX'\to\tilX$ is the blowing up along $V$. In particular, $E_f=V\times_XX'$ and $E_\tilf=V\times_\tilX\tilX'$ and it follows that $E_\tilf=V\times_X\tilX'=E_f\times_{X'}\tilX'$. Using that $(Z\times_XX')\times_{X'}\tilX'=Z\times_X\tilX'=\tilZ\times_{\tilX}\tilX'$, we obtain that also $Z'\times_{X'}\tilX'=\tilZ'$.

For concreteness assume that $E=\sum_{i=1}^nE_i$ is ordered and let $E'=\sum_{i=1}^{n+1}E'_i$ and $\tilE'=\sum_{i=1}^{n+1}\tilE'_i$. The above paragraph easily implies that $\tilE'_i=\tilE_i\times_{X'}\tilX'$ for any $i$. Since by the usual theory $\tilE'$ is an snc divisor with irreducible components $\tilE'_i$, it follows that $E'$ is snc.
\end{proof}

\subsubsection{Principalization}
Assume that $(X,E)$ is a ptm with a boundary and $Z\into X$ is a closed subscheme, then a {\em principalization} of $Z$ (or of its associated ideal $\calI_Z$) on $(X,E)$ is a sequence of $Z_i$-admissible blowings up $X_{i+1}=\Bl_{X_i}(V_i)\stackrel{f_i}\to X_i$ with $0\le i\le n-1$, where $(X_0,E_0,Z_0)=(X,E,Z)$ and for $0\le i<n$ one defines inductively $E_{i+1}$ and $Z_{i+1}$ to be the transforms of $E_i$ and $Z_i$, and $Z_n=\emptyset$. We will use dashed arrows to denote sequences of morphisms, e.g. $X_n\dashto X$.

\subsubsection{The principalization theorem}
It turns out that the usual principalization on $\tilX$ induces a principalization on $X$ yielding the following result:

\begin{theor}\label{princth}
There exists a method which to any triple $(X,E,Z)$ consisting of a ptm $X$ of finite type over a field $k$ of characteristic zero, a boundary $E$ and a closed subscheme $Z\into X$ associates a principalization $$\calP(X,E,Z)\:X_n\dashto X_0=X$$ so that $\calP$ depends on the triple $(X,E,Z)$ smooth-functorially: if $X'\to X$ is a smooth morphism of ptms, $E'=E\times_XX'$ and $Z'=Z\times_XX'$, then $\calP(X',E',Z')$ is obtained by pulling back the blowings up sequence $\calP(X,E,Z)$ and removing all trivial blowings up (those with empty centers).
\end{theor}
\begin{proof}
Choose any smooth-functorial principalization method for varieties, e.g. the method from \cite{Wlodarczyk}. Set $\tilE=E\times_X\tilX$ and $\tilZ=Z\times_X\tilX$ and let $\tilX_{i+1}=\Bl_{V_i}(\tilX_i)$, $0\le i<n$ be the principalization of $\tilZ$ on $(\tilX,\tilE)$. We pushforward the blowing up sequence $\tilX_n\dashto\tilX_0=\tilX$ by setting $X_0=X$ and $X_{i+1}=\Bl_{V_i}(X_i)$. This makes sense because applying Lemma~\ref{blowuplem} inductively we obtain that $\tilX_i$ is, indeed, the reduction of $X_i$ and hence $V_i\into\tilX_i\into X_i$. Applying Lemma~\ref{transformlem} inductively we obtain that $V_i$ has snc with the boundary $E_i$, which is the transform of $E_{i-1}$, and $V_i$ is a subscheme of $Z_i$, which is the transform of $Z_{i-1}$. In other words, the blowings up are $Z_i$-admissible. Finally, Lemma~\ref{transformlem} also implies that $\tilZ_i=Z_i\times_{X_i}\tilX_i$ for any $i\le n$. Since $\tilZ_n$ is empty this implies that $Z_n$ is empty, and hence the sequence $X_n\dashto X_0=X$ is a principalization of $Z$, that we denote $\calF(X,E,Z)$.

Smooth-functoriality of the constructed principalization follows from the smooth-functoriality of the usual principalization and the easily checked fact that all other ingredients, such as the reduction morphism $\tilX\to X$, are compatible with smooth base changes.
\end{proof}

\subsection{Divisors with monomial reduction}\label{monomsec}

\subsubsection{Modifications with a trivial reduction}
Let $h\:Y\to X$ be a morphism of ptms. Then the following conditions are equivalent: (i) $h$ is a bijective modification, (ii) is a modification and a homeomorphism, (iii) $h$ is generically an isomorphism and the reduction $\tilh$ is an isomorphism. If these conditions are satisfied, we say that $h$ is a {\em modification with a trivial reduction}.

\begin{exam}
An important example of a modification with a trivial reduction is the blowing up $Y=\Bl_\tilD(X)$ along a divisor $\tilD\into\tilX$ of the reduction. Locally on $X$ we can assume that $X=\Spec(A)$ is affine and $f\in A$ is such that its image $\tilf\in A/\calN$ defines $\tilD$, where $\calN$ is the nilradical. Then $\tilD=V(\calN+(f))$ in $X$ and $Y=\Spec(A[\frac{\calN}{f}])$ consists of the $f$-chart, since other charts correspond to nilpotent elements and hence are empty.
\end{exam}

More generally, if $\tilD_1\.\tilD_l$ are divisors of $\tilX$ then by induction on $l$ one can define a sequence $X_l\dashto X_0=X$ with $X_{i+1}=\Bl_{\tilD_i}(X_i)$ because $\tilD_i\subset\tilX=\tilX_i$.

\subsubsection{Monomialization}
We saw in Remark \ref{sncrem} that if $D$ is a divisor with an $\tilE$-monomial reduction, $D$ itself does not have to be monomial with respect to any boundary or it can be monomial with respect to another boundary. All these problems are remedied by applying a simple blowing up sequence.

\begin{lem}\label{monomiallem}
Assume that $X$ is a ptm with an ordered boundary $E$ and $D$ is a Cartier divisor such that $\tilD$ is $\tilE$-monomial. Let $\tilD=\sum_{i,n}n\tilD_{i,n}$ be the decomposition, where $\tilD_{i,n}$ is the union of components of $D$ of multiplicity $n$ that lie in $\tilE_i$. Let $X'\dashto X$ be the modification with trivial reduction obtained by blowing up centers $V_{1,1}\.V_{1,n_1},V_{2,1},\dots$, where $V_{i,j}=\coprod_{n\ge j}\tilD_{i,n}$. Then $D\times_XX'$ is monomial with respect to the transform $E'$ of $E$.
\end{lem}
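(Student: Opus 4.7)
The plan is to reduce to a local analysis at a point $x\in X$, choose local parameters adapted to the boundary, and then follow how the nilpotent parameter $\veps$ gets divided through the iterated blowings up until the ``infinitesimal'' correction term in $f$ is absorbed into a unit.

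More precisely, I would first work at a fixed point $x\in X$ and, using Lemma~\ref{cotangentlem}, pick a family of parameters $\veps,t_1\.t_d$; after a harmless change of the $t_i$'s (using that each $\tilE_i$ is smooth and each local equation $g_i$ of $E_i$ satisfies $\tilg_i=\tilt_i\cdot\text{unit}$), I can arrange the parameters so that $E_i=V(t_i)$ locally for each $i\in I$, where $I\subseteq\{1\.l\}$ indexes those components passing through $x$. The $\tilE$-monomiality of $\tilD$ gives $\tilD=\sum_{i\in I}m_i\tilE_i$ locally, so $D=V(f)$ with
$$f=u\prod_{i\in I}t_i^{m_i}+\veps g,\qquad u\in\calO_{X,x}^\times,\ g\in\calO_{X,x}.$$
With this normal form, each center $V_{i,j}$ in the sequence is locally either empty (if $j>m_i$) or equal to $\tilE_i$ (if $j\le m_i$), so the effective local sequence is just $m_i$ blowings up along $\tilE_i$, carried out for $i=1,2,\dots$ in order.

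Next I would analyze a single blowing up in this sequence. Because $\tilE_i=V(t_i)$ is already a Cartier divisor of $\tilX$ and each $\tilX^{(k)}=\tilX$ (the reductions are trivial by construction), the ``$\tilE_i$-blowing up'' in $X^{(k)}$ is the blowing up of $V(\veps^{(k)},t_i)$, and by the example opening \S\ref{monomsec} only its $t_i$-chart is non-empty. In that chart the new nilpotent parameter is $\veps^{(k+1)}=\veps^{(k)}/t_i$; its nilpotency and the constancy of thickness follow from the embedding of the $t_i$-chart into the localization $A[t_i^{-1}]$, where $(\veps^{(k+1)})^h=(\veps^{(k)})^h/t_i^h=0$. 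Iterating $m_i$ times along each $\tilE_i$ in the prescribed order, I would conclude that the nilpotent parameter at the end of the sequence is $\veps^*=\veps/\prod_{i\in I}t_i^{m_i}$. The algebraic payoff is then the identity
$$f=u\prod_{i\in I}t_i^{m_i}+\veps g=\prod_{i\in I}t_i^{m_i}\cdot(u+\veps^*g),$$
in which $u+\veps^*g$ is a unit (unit plus nilpotent). Hence, as a Cartier divisor, $D\times_XX'=\sum_{i\in I}m_iV(t_i)$. To finish, I would use Lemma~\ref{transformlem} inductively to see that $E'$ is snc, and by tracking the transform rule for ordered boundaries observe that each blowing up of $\tilE_i$ removes the current component of support $V(t_i)$ and reintroduces it as the newly-labeled exceptional divisor. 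So in the final $E'$ there is, for every $i\in I$, a distinguished component of support $V(t_i)$ --- either the unchanged $E_i$ (if $m_i=0$) or the last exceptional divisor (if $m_i\ge 1$) --- and these are precisely the components against which the above formula expresses $D\times_XX'$ monomially.

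The main obstacle is the bookkeeping in the middle step: verifying that $\veps^{(k)}$ really transforms as $\veps^{(k)}/t_i$ at each blowing up and, crucially, that after $m_i$ iterations the infinitesimal term $\veps g$ in $f$ gets absorbed into a unit multiple of $\prod t_i^{m_i}$. Everything else is a routine consequence of Lemmas~\ref{blowuplem} and~\ref{transformlem} together with the snc adjustment of parameters at the very beginning.
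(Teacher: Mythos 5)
Your argument is essentially the same as the paper's: both reduce to a local computation at a point $x$, pick parameters $\veps,t_1\.t_l$ with $E=V(t_1\cdots t_r)$, write $D=V(f)$ with $f=\prod t_i^{n_i}+\veps a$ up to a unit, observe that the sequence of blowings up replaces $\veps$ by $\veps'=\veps/\prod t_i^{n_i}$, and conclude that $f=\prod t_i^{n_i}(1+\veps' a)$ is $E'$-monomial since $1+\veps' a$ is a unit. The only difference is that you verify the transformation of the nilpotent parameter one blowing up at a time, whereas the paper states the cumulative effect directly; this is a fair point to spell out, and your iterative check is sound.
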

\begin{proof}
It suffices to check the claim locally at a point $x\in X$. Let $n_i\ge 0$ be such that $x\in \tilD_{i,n_i}$. Choose parameters $\veps,t_1\. t_l$ so that $E=V(t_1\dots t_r)$ at $x$. Then locally at $x$ we have that $\tilD=V(\prod_{i=1}^r\tilt_i^{n_i})$ and hence $D=V(f)$ for $f=\prod_{i=1}^rt_i^{n_i}+\veps a$. Since $\veps'=\veps/\prod_{i=1}^rt_i^{n_i}$ and $t_1\.t_l$ are parameters of $X'$ at $x$ we obtain that $f=\prod_{i=1}^rt_i^{n_i}(1+\veps' a)$ in $\calO_{X',x}$ and $1+\veps' a$ is a unit. This finishes the proof.
\end{proof}

For the sake of completeness we note that the same sequence is produced by the usual principalization:

\begin{rem}
In fact, the sequence $\tilX'\dashto\tilX$ in the above lemma is precisely the principalization blowings up sequence of $\tilD$ on $(\tilX,\tilE)$ produced by the classical order reduction algorithms with marking 1. So, similarly to the argument in the proof of Theorem~\ref{princth}, its pushforward $X'\dashto X$ in the lemma is a principalization of $D$.
\end{rem}

\subsection{Resolution of $B$-schemes}\label{Bschemes}

\subsubsection{Distinguished $B$-pairs}\label{Bpairs}
Let $B=\Spec(k[\pi]/(\pi^n))$. By a {\em distinguished $B$-pair} we mean a morphism $f\:X\to B$ and a divisor $E$ on $X$ such that $X$ is a ptm, $E$ is a boundary on $X$, and $\pi\calO_{X}=\calN\calI$, where $\calN$ is the nilradical of $\calO_{X}$ and $V(\calI)$ is an $E$-monomial divisor. In down to earth terms this means that locally on $X$ there exists coordinates $(\veps,t_1\.t_m)$ such that $E=V(t_1\dots t_r)$ and $f^\#(\pi)=\veps t_1^{d_1}\dots t_r^{d_r}$. In particular, $X\to B$ locally factors through a morphism $$h\:X\to X_0=\Spec(k[\veps,t_1\.t_m]/(\veps^n)),$$ and if $X\to B$ is of finite type, then $h$ is \'etale.

\begin{rem}\label{Bpairsrem}
Note that $X_0$ is a monomial irreducible component of the log smooth $B$-scheme $$Y_0=\Spec(k[\veps,t_1\.t_m]/(\veps^n t_1^{nd_1}\dots t_r^{nd_r}))$$ given by $\veps^n=0$. If $(X,E)$ is a distinguished $B$-pair of finite type, then $h\:X\to X_0$ is \'etale and hence locally on $X$ it can be extended to an \'etale morphism $Y\to Y_0$ such that $h$ is its pullback. In particular, locally $X$ embeds as a monomial irreducible component into a log smooth $B$-scheme $Y$.
\end{rem}

\subsubsection{The main result}
Now we can prove our main result on resolution of $B$-schemes.

\begin{theor}\label{mainth1}
Let $k$ be a field of characteristic zero and $B=\Spec(k[\pi]/(\pi^n))$. Then there exists a procedure $\calF$ which obtains as an input a generically smooth morphism of finite type $X\to B$ and a closed nowhere dense subscheme $Z\into X$, and outputs a modification $\calF(X,Z)\:X'\to X$ with $X'$ a ptm and a boundary $E'$ on $X'$ such that $Z'=Z\times_XX'$ is an $E'$-monomial divisor and $(X',E')$ is a distinguished $B$-pair. In addition, $\calF$ depends smooth functorially on the input: if $Y\to X$ is smooth and $T=Z\times_XY$, then $\calF(Y,T)=\calF(X,Z)\times_XY$.
\end{theor}
\begin{proof}
First, by Theorem~\ref{nonembeddedth} there exists a modification $X_1\to X$ with $X_1$ a ptm. Second, applying Theorem~\ref{princth} to $X_1$ and $Z\times_XX_1$ we obtain a modification of ptms $X_2\to X_1$ and an ordered boundary $E_2$ on $X_2$ such that $Z_2=Z\times_XX_2$ is $E_2$-monomial. Third, the generic smoothness of $f$ implies that generically $\pi$ generates the nilradical $\calN_2\subset\calO_{X_2}$ on an open subscheme $U$. Principalizing $X_2\setminus U$ on $(X_2,E_2)$ by Theorem~\ref{princth} we obtain a modification $X_3\to X_2$, such that $X_3$ is provided with an ordered boundary $E_3$ and $\calN_3|_U=(\pi)|_U$ for $U=X_3\setminus E_3$.

Now, locally on $X_3$ there exists a factorization $(\pi)=\calN_3\calI$, and the locally principal ideal $\calI$ is invertible because it is trivial at the generic point and $X_3$ has no embedded components. Note that $\tilcalI=\calI\calO_{\tilX_3}$ is an invertible ideal supported on the snc divisor $\tilE_3=E_3\times_{X_3}\tilX_3$ and hence is $\tilE_3$-monomial, but $\calI$ does not have to be $E_3$-monomial. Even worse, it is not uniquely defined hence the factorization does not have to globalize. However, blowing up the components of $E_3$ as in Lemma~\ref{monomiallem} we obtain a modification $X'\to X_3$ with a trivial reduction such that the pullback of each $\calI$ as above becomes monomial, that is, on $X'$ we locally have factorizations $(\pi)=\calN'\calI$ with an $E'$-monomial $\calI$. Such a factorization is unique because if $(\pi)=\calN'\calI'$ is another factorization with an $E_3$-monomial $\calI'$, then $V(\calI)$ and $V(\calI')$ restrict to the same divisor on $\tilX_3$ and hence  contain the same components of $E_3$ with the same multiplicities. Thus the factorization $(\pi)=\calN\calI$ globalizes, and this shows that $X'\to X$ is as required.

The smooth-functoriality of the construction follows from functoriality of all ingredients we used.
\end{proof}

\subsection{Other geometric spaces}\label{othersec}
We deduced our results from the classical principalization theorem for algebraic varieties. Now, let us discuss what can be done for more general schemes, complex analytic spaces, etc. These are nowadays standard reductions that were described in details in a few papers, so we only outline them.

\subsubsection{Analytic spaces}\label{spacesec}
It is folklore knowledge that the classical principalization theorem and its proof apply almost verbatim for complex analytic spaces and non-archimedean analytic spaces of characteristic zero, though, to the best of our knowledge, the only formalized framework, in which this was checked, is given in \cite{Bierstone-Milman}. Starting with principalization for analytic spaces one can establish analogues of Theorems~\ref{nonembeddedth}, \ref{princth} and \ref{mainth} using precisely the same arguments as in the paper.

\subsubsection{Schemes with enough derivations}
In fact, the classical principalization algorithm and its justification applies to the wider context of ideals on regular schemes with enough derivations, where the latter were defined in \cite[Remark~1.3.2(iii)]{non-embedded}. The only new (and simple) thing one should check is that any smooth blow up of an $X$ with enough derivations also has enough derivations. Strictly speaking, this was also never published, but a much more involved case of an algorithm in the setting of enough derivations was worked out with all details in \cite{ATW}. Moreover, the functoriality holds with respect to arbitrary regular morphisms and not only the smooth ones. Once principalization for such schemes is known, our arguments extend to this setting verbatim, and one obtains that Theorems~\ref{nonembeddedth}, \ref{princth} and \ref{mainth} hold for schemes locally embeddable into regular schemes with enough derivations, and the functoriality is with respect to arbitrary regular morphisms. Then the usual formal argument (see \cite[\S5.2]{non-embedded}) implies that the same results extend to analytic spaces (recovering the claim of \S\ref{spacesec}) and formal schemes locally embeddable into regular formal schemes with enough derivations.

\subsubsection{Arbitrary qe schemes}
It seems certain that the maximal generality, in which Theorems~\ref{nonembeddedth}, \ref{princth} and \ref{mainth} should hold, is that of arbitrary qe schemes of characteristic zero. However, resolution of non-reduced schemes was not established in such generality and only a weaker form of principalization was proved in \cite[Theorem~1.1.11]{Temkin-embedded} -- it outputs a regular scheme with a monomial ideal, but the intermediate steps can be singular. This would suffice to deduce the last two theorems, but it cannot be used for the non-embedded resolution of generically nil-principal schemes. All in all, the only stumbling block is resolution of arbitrary non-reduced qe schemes.

\section{Complements}\label{mainsec}
In this section we will slightly refine our main result by constructing a global embedding of a modification into a log smooth $B$-scheme. This requires an additional preparation.

\subsection{Factorization}\label{factorsec}
In addition to the analogues of the usual resolution and principalization we will need certain results about factorization and resolution of indeterminacies for ptms. We will only cover the cases that will be needed in our applications. For shortness, by a {\em component} of a divisor $E$ we mean a union of few its connected components.

\begin{theor}\label{factorth}
Assume that $f\:Y\to X$ is a modification of ptms with a trivial reduction. Assume, in addition, that there exists an ordered boundary $E=\sum_{i=1}^lE_i$ on $X$ such that $f$ is an isomorphism over $X\setminus E$. Then there exists a sequence of blowings up of components of $\tilE_i$'s $Y'\dashto Y$ such that the composed morphism $f'\:Y'\to X$ splits into a composition of blowings up of components of $\tilE_i$'s. The sequence $Y'\dashto Y$ and the splitting of $f'$ depend on the datum functorially with respect to smooth morphisms $\oX\to X$ and pullbacks $\of\:\oY=Y\times_X\oX\to\oX$ and $\oE=E\times_X\oX$.
\end{theor}
\begin{proof}
To simplify notation we identify the underlying topological spaces of $X$ and $Y$. Notice that locally we can factor $\calN_X\calO_Y$ as $\calI\calN_Y$, where $\calI$ is invertible -- just take local generators $\veps_Y$ and $\veps_X$, then $\veps_X=a\veps_Y$ and we take $\calI=(a)$. This does not yield a global divisor $\calN_Y/\calN_X$ because an element $a$ is defined only up to adding an element annihilated by $\veps_Y$, but the reduction $(\tila)$ is unique and we obtain a global divisor on $\tilX$ that we denote $\tilD=\wt{\calN_Y/\calN_X}$. Clearly, it is $\tilE$-monomial, so we fix the decomposition $\tilD=\sum_{i,n}n\tilD_{i,n}$ with $\tilD_{i,n}\subset\tilE_i$. Now, let $Y'\dashto Y$ be the sequence obtained by successive blowing up the centers $V_{i,j}=\coprod_{n\ge j}\tilD_{i,n}$ and let $E'$ be the transform of the boundary to $Y'$. By Lemma~\ref{monomiallem} the local divisors $(a)$ become $E'$-monomial on $Y'$. The reductions $(\tila)$ agree on the intersections (this happens already on $X$), hence we obtain a factorization $\calN_{Y'}=D'\calN_X$ with an $E'$-monomial divisor $D'$.

Choose the presentation $D'=\sum_{i,n}nD'_{i,n}$ with $D'_{i,n}\subset E_i$ (we do not need this, but in fact only even $n$'s show up and $\tilD'_{i,2n}=\tilD_{i,n}$). We claim that $Y'\to X$ splits into the blowing up sequence $Y'=X_m\dashto X_0=X$ with centers $V'_{i,j}=\coprod_{n\ge j}\tilD'_{i,n}$. Indeed, this can be checked locally at a point $x\in X$. Choose parameters $\veps_X,t_1\.t_n\in\calO_{X,x}$ such that $E=V(t_1\dots t_r)$ and let $n_i$ be such that $x\in D'_{i,n_i}$. Then $\veps_Y=\veps_X/\prod_{i=1}^rt_i^{n_i}$ is a generator of the nilradical of $\calO_{Y',x}$ and hence an embedding $\calO_{X_m,x}\subseteq\calO_{Y',x}$ arises. Moreover, it is an isomorphism by Nakayama's lemma because the reductions coincide and $\calO_{X_m,x}$ contains a generator of the nilradical of $\calO_{y',x}$. Thus, $Y'=X_m$ and we have constructed a splitting of $Y'\to X$. The check that our construction is smooth-functorial is straightforward and we omit it.
\end{proof}

\begin{cor}\label{morphismCor}
	Let $X\rightarrow B_n=\Spec(k[\pi]/(\pi^n))$ be generically smooth. Then there exists a construction $\mathcal{G}(X\rightarrow B_n)=Y\rightarrow X\rightarrow B_n$ such that the composed morphism is smooth away from an snc divisor. This construction is functorial with respect to smooth morphism.
\end{cor}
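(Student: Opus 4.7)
The plan is to deduce the corollary essentially for free from Theorem~\ref{mainth1}. Apply that theorem to the input $(X\to B_n,\emptyset)$, taking the empty closed subscheme for $Z$ (which is trivially closed and nowhere dense). The output is a modification $Y=\calF(X,\emptyset)\to X$ with $Y$ a ptm and a boundary $E$ on $Y$ such that $(Y,E)$ is a distinguished $B$-pair. Define $\mathcal{G}(X\to B_n)$ to be the composition $(Y\to X\to B_n)$ and take $E$ as the snc divisor appearing in the statement; its snc property is built into the definition of a boundary on a ptm from \S\ref{boundary}.

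The only real verification is that the composed morphism $Y\to B_n$ is smooth on the open subset $Y\setminus E$. This is an \'etale-local question on $Y$, so by the definition of a distinguished $B$-pair in \S\ref{Bpairs} we may work on an \'etale chart $h\:Y\to X_0=\Spec(k[\veps,t_1\.t_m]/(\veps^n))$ compatible with the structure map $\pi=\veps t_1^{d_1}\dots t_r^{d_r}$ and with $E=h^{-1}(V(t_1\dots t_r))$. It therefore suffices to show that $X_0\to B_n$ is smooth on the complement of $V(t_1\dots t_r)$. On this open locus the monomial $u=t_1^{d_1}\dots t_r^{d_r}$ is a unit, so the relation $\pi=\veps u$ forces $\veps=\pi u^{-1}$ and the coordinate ring collapses to $k[\pi]/(\pi^n)[t_1^{\pm 1}\.t_r^{\pm 1},t_{r+1}\.t_m]$, which is manifestly smooth (in fact, a localization of a polynomial ring) over $B_n=\Spec(k[\pi]/(\pi^n))$. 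Composing with the \'etale $h$ yields smoothness of $Y\to B_n$ on $Y\setminus E$.

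Smooth functoriality of $\mathcal{G}$ with respect to smooth morphisms $\oX\to X$ is then immediate from the corresponding property of $\calF$ in Theorem~\ref{mainth1}, since the formation of the \'etale chart and the computation of the smooth locus both commute with smooth base change. There is essentially no serious obstacle here: the whole content of the corollary is already encoded in the local model for distinguished $B$-pairs, which has been engineered precisely so that the structure morphism to $B_n$ is smooth away from its ``toric'' boundary $V(t_1\dots t_r)$.
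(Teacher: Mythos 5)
Your argument is correct, but it takes a genuinely different route from the paper's own proof. The paper proves the corollary from scratch: it applies Theorem~\ref{nonembeddedth} to get a ptm modification $X'\to X$, then takes $Z$ to be the singular locus of the composite $X'\to B_n$ and applies Theorem~\ref{princth} to principalize $Z$ on $(X',\emptyset)$; since $Y\to X'$ is an isomorphism over $X'\setminus Z$ and the preimage of $Z$ in $Y$ is supported on the boundary $E$, the composite $Y\to B_n$ is smooth on $Y\setminus E$. You instead invoke Theorem~\ref{mainth1} directly (with $Z=\emptyset$), which does more work internally (arranging the factorization $\pi\calO=\calN\calI$ and making $\calI$ monomial via Lemma~\ref{monomiallem}), and then you read off smoothness on $Y\setminus E$ from the local model of a distinguished $B$-pair via the coordinate computation $\veps=\pi u^{-1}$. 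Your local computation is correct, and the smooth-functoriality argument is fine. The trade-off: the paper's route is more economical (it uses only the two building blocks of Theorem~\ref{mainth1}, not the full statement), while your route establishes strictly more structure on the output (the distinguished $B$-pair), which the paper only recovers informally afterward in Remark~\ref{etaleloc}(2). Both prove the corollary.
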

\begin{proof}
	As $X\rightarrow B_n$ is generically smooth, $X$ is generically nilprincipal and we may apply Theorem \ref{nonembeddedth} to get $X'\rightarrow X$ such that $X'$ is a ptm. Let $Z\subset X'$ be the singular locus of the composed morphism $X'\rightarrow X\rightarrow B_n$, and by Theorem \ref{princth} we have a principalization $\mathcal{P}(X', \emptyset, Z):Y\rightarrow X'$ such that the composed morphism $Y\rightarrow B_n$ is smooth away from an snc divisor. This process is smooth-functorial as both ingredients are.
\end{proof}
\begin{rem}\label{etaleloc}
	We make two remarks on this result:
	\begin{enumerate}
		\item The morphism $Y\rightarrow X'$ is a modification of ptms with trivial reduction, so if we wish to apply Theorem \ref{factorth} we may.
		\item In practice, this means that given a generically smooth $X\rightarrow B_n$ there is a resolution which \'etale-locally looks like $X'=\text{Spec}k[\epsilon, t_1,..., t_l]/(\epsilon^n)\rightarrow B_n$ given by $\pi\mapsto\epsilon\cdot t_1^{n_1}...t_l^{n_l}$.
	\end{enumerate}
\end{rem}

\subsection{Retracts}\label{retractsec}
By a retract of a ptm $X$ we mean a morphism $r\:X\to\tilX$ which is a retract of the closed immersion $i\:\tilX\into X$, that is, $r\circ i=\id_\tilX$. In general, a retract does not exist, but if $X$ is of finite type over a field $k$, then it exists whenever $X$ is affine. Indeed, $X$ is a nilpotent thickening of $\tilX$ and $\tilX$ is smooth over $k$, hence the $k$-morphism $\id_\tilX$ lifts to a $k$-morphism $r\:X\to\tilX$. (For an arbitrary ptm $X$, a retract at least exists at the generic points $\eta\in X$, namely, $r_\eta\:\eta\to\tileta$ corresponds to a choice of a field of definition $k(\tileta)\into\calO_{X,\eta}$ of the Artin ring $\calO_{X,\eta}$.) We will show that after blowing up $X$ one can extend a generic retract to a retract of the whole $X$. The argument will be similar to the proof of Theorem~\ref{factorth} but a bit more elaborate. Here is the main particular case we will need.

\begin{lem}\label{retlem}
Let $X$ be a ptm of finite type over a field $k$ with nilradical $\calN\subset\calO_X$, let $D=V(\calI)$ be a Cartier divisor with a smooth reduction $\tilD=D\times_X\tilX$ and complement $X_0=X\setminus D$, and let $i\:X_0\into X$ denote the open immersion. Assume that $D$ is irreducible with generic point $\nu$ and $r_0\:X_0\to\tilX_0$ is a retract of $X_0$, then

(i) The retract $r_0$ extends to $r$ if and only if the homomorphism $\phi\:i_*\calO_{\tilX_0}\into i_*\calO_{X_0}$ induced by $r_0$ takes $\calO_\tilX$ to $\calO_X$.

(ii) The following three conditions are equivalent and they hold for a large enough $n$:
(a) $\phi$ takes $\calO_\tilX$ to $\calO_X[\calI^{-n}\calN]$,
(b) $\phi$ takes $\calO_{\tilX,\nu}$ to $\calO_{X,\nu}[\calI_\nu^{-n}\calN_\nu]$,
(c) $r_0$ extends to a global retract on $\Bl_{\widetilde{nD}}(X)$.

(iii) Assume that $n\ge 1$ is such that the conditions in (ii) are satisfied and let $X'=\Bl_\tilD(X)$. Then $\phi$ takes $\calO_{\tilX'}$ to $\calI^{1-n}\calO_{X'}$.
\end{lem}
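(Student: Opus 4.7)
Part (i) is essentially formal. Since $X$ is a ptm, Lemma~\ref{thicklem}(ii) shows it has no embedded components, and then the restriction map $\calO_X\into i_*\calO_{X_0}$ is injective (sections that vanish on the complement of the Cartier divisor $D$ must be zero). The same argument gives $\calO_\tilX\into i_*\calO_{\tilX_0}$. The retract $r_0^\#$ defines, upon pushforward along $i$ and restriction to $\calO_\tilX$, a ring map $\calO_\tilX\to i_*\calO_{X_0}$, and by the injectivity this descends to a ring map $\calO_\tilX\to\calO_X$ iff its image lies in $\calO_X$. Any such descent is automatically a section of $\calO_X\onto\calO_\tilX$, since its restriction to $X_0$ coincides with the given splitting $r_0^\#$.

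For part (ii), the implication $(\mathrm{a})\Rightarrow(\mathrm{b})$ is by localization at $\nu$. To see that (b) holds for $n\gg 0$, I would choose a coefficient field $k(\tilnu)\into\calO_{X,\nu}$ (possible in characteristic zero by Cohen structure) and a preimage of the uniformizer $\tilt$; then $\phi_\nu(\tilt)=t+\veps\gamma$ with $\gamma\in t^{-n_0}\calO_{X,\nu}$ for some $n_0$, and $\phi_\nu$ applied to elements of $k(\tilnu)$ lies in $\calO_{X,\nu}[\calI_\nu^{-n_0}\calN_\nu]$; hence for $n\ge n_0$ all of $\calO_{\tilX,\nu}$ maps into $\calO_{X,\nu}[\calI_\nu^{-n}\calN_\nu]$. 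For the converse $(\mathrm{b})\Rightarrow(\mathrm{a})$, since $\calN$ is nilpotent of local thickness $h$, the subsheaf $\calO_X[\calI^{-n}\calN]=\sum_{k=0}^{h-1}\calI^{-kn}\calN^k$ of $i_*\calO_{X_0}$ is coherent, so the inclusion $\phi(\calO_\tilX)\subseteq\calO_X[\calI^{-n}\calN]$ is a local condition; it is trivial on $X\setminus D$, and at $x\in D$ the condition at $\nu$ propagates to $x$ using the injection $\calO_{X,x}\into\calO_{X,\nu}$ (a consequence of $X$ having no embedded components, so $\mathfrak{p}_\nu$ avoids all associated primes at $x$), combined with the integrality of the smooth $\tilD$ and careful tracking of $t$-denominators. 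For $(\mathrm{a})\Leftrightarrow(\mathrm{c})$, I note that $\Bl_{\widetilde{nD}}(X)$ is the blowup of the ideal $\calN+\calI^n$ on $X$; on the chart containing the strict transform of $X$ (itself a ptm, containing $X_0$ as an open subscheme, and on which $(\calN+\calI^n)\calO_{X'}$ is generated by $\calI^n$) one has $\calN\calO_{X'}\subseteq\calI^n\calO_{X'}$; hence $\calO_X[\calI^{-n}\calN]\calO_{X'}\subseteq\calO_{X'}$, and applying part~(i) to this ptm chart converts (a) into the desired retract, with the converse following from the same containment.

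Part (iii) runs the same blowup analysis for $X'=\Bl_\tilD(X)=\Bl_{\calN+\calI}(X)$: on the chart where $\calI$ generates $(\calN+\calI)\calO_{X'}$ the local generator $\veps$ of $\calN$ factors as $\veps=t\alpha$ with $\alpha\in\calO_{X'}$ nilpotent, and combining this with (a) makes each summand $\calI^{-kn}\calN^k$ pull back to $\calI^{k(1-n)}\alpha^k\calO_{X'}$; after passing to the reduction $\calO_{\tilX'}$ of $X'$, the higher-$k$ contributions can be absorbed and one is left with the bound $\calI^{1-n}\calO_{X'}$ for the image of $\phi$ on $\calO_{\tilX'}$. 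The main technical obstacle I anticipate is the propagation step in $(\mathrm{b})\Rightarrow(\mathrm{a})$: promoting a condition that holds generically on $D$ to a pointwise condition on all of $D$ requires careful manipulation with denominators and associated primes, and it is precisely there that the no-embedded-components property of the ptm $X$ and the integrality of the smooth $\tilD$ are used most decisively.
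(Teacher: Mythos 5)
Your proof of part (i) is correct and essentially matches the paper's. The equivalence (a)$\Leftrightarrow$(c) is also handled correctly: $\Bl_{\widetilde{nD}}(X)=\Spec_X\calO_X[\calI^{-n}\calN]$ consists of a single chart, and applying (i) to this ptm yields the equivalence. Your argument that (b) holds for $n\gg 0$ is also essentially the same as the paper's (lift elements, observe that $s_0(\tila)-a$ lies in $\calN_t$, use finite generation and quasi-compactness).

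However, the implication (b)$\Rightarrow$(a) has a genuine gap that you yourself flag as ``the main technical obstacle.'' The property you invoke --- that $X$ has no embedded components, i.e.\ is $S_1$ --- is not enough to propagate a containment from the generic point $\nu$ of $D$ to all of $X$. The injection $\calO_{X,x}\hookrightarrow\calO_{X,\nu}$ tells you nothing about whether an element of $\calO_{X,\nu}[\calI_\nu^{-n}\calN_\nu]$ that also lies in $(i_*\calO_{X_0})_x$ must in fact lie in $\calO_{X,x}[\calI_x^{-n}\calN_x]$; that is a codimension-$\ge 2$ extension statement, and what it requires is Serre's condition $S_2$, not $S_1$. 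The paper supplies exactly this via Corollary~\ref{CMcor}: a ptm locally embeddable in a regular scheme is Cohen--Macaulay, hence $i_*\calO_U=\calO_{X'}$ whenever $U$ is the complement of a codimension-$\ge 2$ closed subset. Concretely, the paper first proves (a)$\Leftrightarrow$(c), and then reduces (b)$\Rightarrow$(a) to the claim that a retract of $\Bl_{\widetilde{nD}}(X)$ which exists over $X_0\cup\{\nu'\}$ (an open set whose complement has codimension $\ge 2$) automatically extends to the whole blowup because the blowup is again a ptm and hence CM. Without this CM/$S_2$ input, the propagation step you anticipate cannot be completed by ``careful tracking of $t$-denominators'' alone. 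A similar gap appears in your sketch of (iii), where the assertion that ``the higher-$k$ contributions can be absorbed'' is not justified; the paper instead computes directly that the nilradical of $\calO_{X'}=\calO_X[\calI^{-1}\calN]$ is $\calN'=\calI^{-1}\calN$ and observes $\calO_X[\calI^{-n}\calN]=\calO_{X'}[\calI^{1-n}\calN']$, so the invariant of (ii) drops by one under the blowup.
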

\begin{proof}
All claims can be checked locally on $X$, where we use in (ii) that $X$ is quasi-compact. Thus, we can assume that $X=\Spec(A)$ and $\calI=(t)$, and then $X_0=\Spec(A_t)$, $\tilX=\Spec(\tilA)$ and $\tilX_0=\Spec(\tilA_t)$, where $\tilA=A/\calN$. Let $s_0\:\tilA_t\to A_t$ be the section corresponding to $r_0$. Since $\tilX\into X$ is a homeomorphism, $r_0$ extends to $r$ if and only if the homomorphism of sheaves extends, that is $s_0(\tilA)\subseteq A$. This proves (i).

Recall that $X'=\Bl_{\widetilde{nD}}(X)=\Bl_{(t^n)+\calN}(X)$ consists of the $t$-chart, hence $X'=\Spec(A_n)$, where $A_n=A[t^{-n}\calN]$. Clearly, $X'\to X$ is a homeomorphism and we denote the preimage of $\nu$ by $\nu'$. Applying (i) to $X'$ we obtain that (a) and (c) are equivalent, and the equivalence of (a) and (b) reduces to the claim that if $r_0$ extends to $\nu'$, then $r_0$ extends to the whole $X'$. But $X'$ can be obtained from the ptm $X$ by $n$ successive blowings up of $\tilD$, hence $X'$ is a ptm and any regular function on $X_0$ that extends to $\nu'$ automatically extends to the whole $X'$ by Corollary~\ref{CMcor}. The claim about extension of $r_0$ follows.

Let us now prove that for a large $n$ the condition (a) is satisfied. Given $\tila\in\tilA$ choose a lift $a\in A$ and notice that $s_0(\tila)-a\in\calN_t$ and hence lies in a subring $A_n=A[t^{-n}\calN]$ of $A_t$. Choosing a set $\tila_1\.\tila_l$ of $k$-generators of $\tilA$ and a large enough $n$, we obtain that $s_0(\tila_i)\in A_n$ for any $1\le i\le l$ and hence $s_0(\tilA)\subset A_n$. This proves the second part of (ii).

Finally, in (iii) $X'=\Spec(A')$, where $A'=A_1$, and hence the nilradical of $A'$ is $\calN'=t^{-1}\calN$ and $\tilA=\tilA'$. Therefore, $s_0(\tilA')\subset A_n=A'[t^{1-n}\calN']$, proving (iii).
\end{proof}

\begin{theor}\label{retractth}
There exists a method which to any ptm with a boundary $(X,E)$ of finite type over a field $k$ of characteristic zero and a generic retract $r_\eta\:\eta\to\tileta$, where $\eta=\coprod_{i=i}^n\Spec(\calO_{X,\eta_i})$ and $\eta_1\.\eta_n$ is the set of generic points of $X$, associates a sequence of admissible blowings up $$\calR(X,E,r_\eta)\:(X',E')\dashto (X,E)$$ such that $r_\eta$ extends to a retract $r\:X'\to\tilX'$. Moreover, one can construct $\calR$ functorially with respect to \'etale morphisms $Y\to X$ provided with a compatible pair of generic retracts $r_{\eta_Y}$ and $r_{\eta_X}$.
\end{theor}
\begin{proof}
We will first construct the retract as a composition of two admissible blowings up sequences, and only then discuss the functoriality. To simplify notation we assume that $X$ is irreducible and $\eta$ is the generic point, but the argument works in general as well. Let $s_\eta\:k(\eta)\into\calO_{X,\eta}$ be the field of coefficients corresponding to $r_\eta$. Recall that $X$ has no embedded components, hence $\calO_{X,x}\subseteq\calO_{X,\eta}$ for any $x\in X$ and by Lemma~\ref{retlem}(i) $r_\eta$ extends to $r$ locally at $x\in X$ if and only if $s_\eta(\calO_{\tilX,x})\subseteq\calO_{X,x}$. Clearly, this is an open condition and we denote by $U$ the open set of such points $x\in X$, and denote by $Z=X\setminus U$ the complement. In other words, $Z$ is the indeterminacy locus of the rational map $r_\eta\:X\to\tilX$. The first part of $\calR$ is the principalization $\calP(X,E,Z)\:X'=X_l\dashto X_0=X$ of $Z$. Let $E'_1\.E'_l$ be the components of $E'$ created by $\calP$, then the indeterminacy locus $Z'$ of $r_\eta\:X'\to\tilX'$ is contained in $E''=\sum_{i=1}^lE'_i$.

The remaining procedure iteratively blows up unions of components of $\tilE''$. In particular, the reduction $\tilX'$ remains unchanged, so we can identify $\tilE'_i$ with subschemes of further blowings up. The choice of the center on each step is done by the following rule: for any generic point $\nu\in E''$ let $i=i(\nu)$ be the number for which $\nu\in E'_i$ and let $n(\nu)$ be the minimal $n\ge 0$ which satisfies the condition of Lemma~\ref{retlem}(ii). The step takes all generic points $\nu$ such that $n(\nu)>0$ (i.e. the point is in the indeterminacy locus) and the lexicographic value of $(n(\nu),i(\nu))$ is maximal, and blows up the reduced closed subschemes with these generic points. In other words, we blow up components of $\tilE'_i$ with the maximal value of the invariant $(n(\nu),i(\nu))$. The maximal value of the invariant drops after each blowing up by Lemma~\ref{retlem}(ii) and (iii). Part (i) of the same lemma says that once $n$ drops to zero, the indeterminacy locus is empty, that is, $r_\eta$ extends to a global retract.

Finally, let us discuss the functoriality. So let us assume that $f\:Y\to X$ is \'etale, $E_Y=E\times_XY$ and a generic retract $r_{\eta_Y}$ is the pullback of $r_{\eta_X}$. The principalization is functorial with respect to smooth morphisms, so we should only check that the indeterminacy locus and the invariant $n(\nu)$ used in the second step are compatible with $f$. Moreover, if $D$ is the irreducible component of $\nu$, then by Lemma~\ref{retlem}(ii) $n(\nu)$ is the minimal $n$ such that blowing up $\widetilde{nD}$ resolves the indeterminacy at $\nu$. Blowings up are compatible with smooth (even flat) morphisms, hence it suffices to check functoriality of the indeterminacy locus: if $y\in Y$ and $x=f(y)$, then $r_{\eta_X}$ extends to $x$ if and only if $r_{\eta_Y}$ extends to $y$. Furthermore, since the indeterminacy locus is open, localizing $X$ and $Y$ appropriately we can reduce the problem to the following particular case: $r_{X_\eta}$ extends to a global retract $r_X$ if and only if $r_{Y_\eta}$ extends to a global retract $r_Y$. Clearly, it suffices to check the latter locally, so we can assume that $X$ and $Y$ are affine.

Assume that $r_{\eta_X}\:\eta_X\to\eta_\tilX$ extends to $r\:X\to\tilX$. Then a morphism $Y\to X\to\tilX$ arises and the $\tilX$-morphism $\id_\tilY$ lifts uniquely to a $\tilX$-morphism $r_Y\:Y\to\tilY$ by the \'etalenes of $\tilY$ over $\tilX$ and affineness of $Y$. Thus, the retract $r_X$ lifts uniquely to a retract $r_Y$. Since the lift $r_{\eta_Y}$ of $r_{\eta_X}$ is also unique by the same argument, both agree, that is $r_Y$ is the extension of $r_{\eta_Y}$. Conversely, if $r_{\eta_Y}$ extends to $r_Y$, then it is easy to see that $r_{\eta_X}$ extends to $r_X$.

%$$
%\xymatrix{
%Y\ar[r]\ar@{..>}[rd] & X& \\
%\tilY\ar[d]\ar@{^(->}[r]\ar@{^(->}[u]&\tilY\times_\tilX X\ar[d]\ar[r]\ar[u]&\tilY\ar[d]\\
%\tilX\ar@{^(->}[r]&X\ar[r]^{r_X}&\tilX.
%}
%$$

\end{proof}

We do not know if functoriality holds for smooth morphisms as well.

\subsection{Log blowings ups}\label{logblowsec}
We will need one more construction on a ptm with boundary. Informally speaking, it replaces boundary components by varieties intersecting $X$ so that one obtains a thick version of a normal crossings variety. The most convenient way to realize this is by using log geometry.

\subsubsection{Associated log structure}
As often happens one can encode the boundary in a log structure, and in various applications this is even a more conceptual approach. Given a ptm with a boundary $(X,E)$ we define the associated log structure $M_X$ as follows: for a point $x\in X$ let $t_0=\veps$ be a generator of the nilradical of $\calO_x$ and let $t_1\.t_r\in\calO_x$ be parameters that define the irreducible components of $E$ passing through $x$ and consider the prelogarithmic structure $\oM_x=\oplus_{i=0}^r\bbN\log(t_i)\to\calO_x$ which sends $\log(t_i)$ to $t_i$. Since the choice of $\veps$ and $t_i$ is unique up to a unit, the associated log structure $M_x$ is independent of the choices, and it is easy to see that the local definitions patch to a global log structure $M_X$ on $X$. Clearly, $E$ can be reconstructed from $M_X$, so we also address to the log scheme $(X,M_X)$ as a {\em ptm with a boundary}.

\begin{rem}
Note that our log structure is the direct sum of the hollow log structure generated by $\log(\veps)$ and the divisorial log structure generated by $\oplus_{i=1}^r\bbN\log(t_i)$. This is the hollow part which makes the log blowings up in \S\ref{redsec} below so different from the usual blowings up.
\end{rem}

\subsubsection{Log blowings up}
Recall that in log geometry there exists a canonical operation making log ideals invertible. Namely, given a log scheme $(X,M_X)$ and an ideal $J\subseteq M_X$ (giving it is equivalent to giving an ideal $\oJ\subseteq\oM_X$) the {\em log blowing up} of $X$ along $J$ is the universal morphism of log schemes $\LogBl_J(X)=X'\to X$ such that $J'=JM_{X'}$ is invertible. We refer to \cite{Niziol} for details or to \cite[\S4.3]{lognotes} for a brief review.

\begin{rem}\label{logrem}
(i) Unlike the usual blowing up there is no distinction between principal and invertible ideals in $M_X$, and this makes the operation more functorial. In particular, it is compatible with arbitrary base changes (and not only flat or log flat ones).

(ii) In nice cases, for example, when $X$ is log regular, $\LogBl_J(X)=\Bl_{J\calO_X}(X)$ coincides with the usual blowing up. 

(iii) As a corollary from (i) and (ii) one obtains a very simple way to compute log blowings up: just consider a (local) chart $X\to X_0$, then $J$ is the pullback of $J_0\subseteq M_{X_0}$, and we simply have $\LogBl_J(X)=\Bl_{J_0\calO_{X_0}}(X_0)\times_{X_0}X$. This also indicates that in general (e.g. when the log structure is hollow) such a morphism does not have to be birational, can create new components and can even increase the dimension, see \cite[4.3.10]{lognotes} for some examples.
\end{rem}

\subsubsection{Log blowings up of reduced divisors}\label{redsec}
We will use a very special form of log blowings up of a ptm with a boundary $(X,M_X)$.

\begin{lem}\label{logblowlem}
Let $(X,M_X)$ be a ptm with a boundary and $\tilD$ a smooth component of the induced boundary $\tilE$ of $\tilX$. Then $X''=\Bl_\tilD(X)$ is an irreducible component of $X'=\LogBl_\tilD(X)$. In addition, $X''$ is a monomial subscheme of $X'$.
\end{lem}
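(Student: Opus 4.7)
The plan is to work \'etale-locally on $X$ to reduce to the model case, compute both blowings up explicitly, and verify the two assertions by inspecting the charts. By choosing an \'etale neighborhood one reduces to $X = X_0 = \Spec(A)$ with $A = k[\veps, t_1, \dots, t_m]/(\veps^n)$, boundary $E = V(t_1 \cdots t_r)$, and $\tilD = V(t_1) \subset \tilX$; the log structure of $X_0$ is generated by $\log\veps, \log t_1, \dots, \log t_r$, and the log ideal corresponding to $\tilD$ is $J = (\log\veps, \log t_1)$.

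To compute $X' = \LogBl_\tilD(X_0)$ I will lift to the log regular ambient $Y_0 = \Spec(k[\veps, t_1, \dots, t_m])$. By Remark~\ref{logrem}(ii) one has $\LogBl_J(Y_0) = \Bl_{(\veps, t_1)}(Y_0)$, and by Remark~\ref{logrem}(i) this is compatible with the base change $X_0 \hookrightarrow Y_0$, so $X' = \Bl_{(\veps, t_1)}(Y_0) \times_{Y_0} X_0$. Computing the two standard charts and pulling back $\veps^n = 0$: the $\veps$-chart (with $s = t_1/\veps$) becomes $\Spec(k[\veps, s, t_2, \dots, t_m]/(\veps^n))$, an irreducible ptm; the $t_1$-chart (with $u = \veps/t_1$, so $\veps = t_1 u$) becomes $\Spec(k[t_1, u, t_2, \dots, t_m]/(t_1^n u^n))$, whose primary decomposition $(t_1^n u^n) = (t_1^n) \cap (u^n)$ gives two irreducible components $V(t_1^n)$ and $V(u^n)$.

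For $X'' = \Bl_\tilD(X_0)$, as noted in the example of \S\ref{monomsec}, the $\veps$-chart is empty because $\veps$ is nilpotent, so $X'' = \Spec(A[\veps/t_1])$ consists only of the $t_1$-chart. A direct check using the free $k[t_1, \dots, t_m]$-module decomposition $A = \bigoplus_{i=0}^{n-1}\veps^i\,k[t_1, \dots, t_m]$ shows that the kernel of $A[u] \to A[t_1^{-1}]$, $u \mapsto \veps/t_1$, is $(u^n, t_1 u - \veps)$, yielding $X'' = \Spec(k[t_1, u, t_2, \dots, t_m]/(u^n))$. In particular, $X''$ is precisely the $V(u^n)$ component of the $t_1$-chart of $X'$. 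To see that this identification globalizes, I will check that on the overlap of the two charts of $X'$ the coordinate $u = 1/s$ is invertible, so the relation $u^n = 0$ collapses the $V(u^n)$ component; consequently $V(u^n)$ is supported entirely in the $t_1$-chart, while $V(t_1^n)$ is what glues with the $\veps$-chart of $X'$. Hence $X'' = V(u^n)$ is a connected irreducible component of $X'$.

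For monomiality, the log structure on the $t_1$-chart of $X'$ is generated by $\log u, \log t_1, \log t_2, \dots, \log t_r$ (with $\log\veps = \log u + \log t_1$), so $u^n$ is a monomial in $M_{X'}$ and $X'' = V(u^n)$ is a monomial subscheme of $X'$. The main obstacle will be the asymmetric behavior of the two charts caused by the nilpotency of $\veps$: it kills the $\veps$-chart of $X''$ while leaving that of $X'$ intact, thereby producing the extra irreducible component of $X'$. The subtlety is to verify via the Rees-algebra computation (or equivalently via the free-basis argument above) that the defining relation on $X''$ is $u^n = 0$ and not merely the weaker $t_1^n u^n = 0$ that cuts out $X'$ in the $t_1$-chart.
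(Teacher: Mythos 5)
Your proof is correct and follows essentially the same route as the paper: reduce to a local computation, express $\LogBl_\tilD(X)$ as the base change along a chart of a usual blow-up of a log regular scheme, inspect the two standard charts, and identify $\Bl_\tilD(X)$ as the component $V(u^n)$ (the paper's $V(\veps'^h)$) cut out by a monomial in the $t_1$-chart. The only cosmetic difference is that you pass to the explicit model $k[\veps,t_1,\dots,t_m]/(\veps^n)$ via an \'etale map and then verify the kernel of $A[u]\to A_{t_1}$ directly, whereas the paper keeps a general local ring $A$ and uses the monoidal chart $\Spec(\bbZ[\veps,t_1,\dots,t_r])$, asserting $A''=A'/(\veps'^h)$ with less detail; your free-basis computation makes that step explicit.
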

\begin{proof}
The claim can be checked locally over a point $x\in\tilD$, so assume that $X=\Spec(A)$ and $\veps,t_1\.t_n$ is a family of parameters such that $\tilD=(\veps,t_1)$ and $E=V(t_1\dots t_r)$ at $x$. Then $Y=\Spec(\bbZ[\veps,t_1\.t_r])$ is a chart for $X$, and hence $X'\to X$ is the base change of the log blowing up of $Y$ along $(\veps,t)$, where we set $t=t_1$ for shortness (see Remark~\ref{logrem}(iii)). Since $Y$ is log regular, the latter is the usual blowing up described by the charts, and hence $X'$ is glued from two charts: $X'_t=\Spec(A[\veps']/(t\veps'-\veps))$ and $X'_\veps=\Spec(A[t']/(t'\veps-t))$.

Recall that $X''=\Spec(A'')$, where $A''=A[\frac{\veps}{t}])$, and note that $A''$ is the image of $A'=A[\veps']/(t\veps'-\veps)$ in $A_t$, hence $X''$ is the irreducible component of $X'$, is the strict transform of $X$ and lies in the $t$-chart. Furthermore, it is easy to see that $A''=A'/(\veps'^h)$, where $h$ is the thickness of $X$, hence $X''$ is given by vanishing of the monomial ideal $(\veps'^h)$.
\end{proof}

Here is a concrete typical example.

\begin{exam}
Let $A=k[t,\veps]/(\veps^n)$ and $X=\Spec(A)$ with the log structure induced by $\bbN\log(\veps)\oplus\bbN\log(t)$. Then $\tilD=V(t,\veps)$ is a monomial subscheme, which is also a smooth divisor in $\tilX$. The log blowing up $X'=\LogBl_\tilD(X)$ is glued from two charts: $$X'_t=\Spec(A[\veps']/(t\veps'-\veps))=\Spec(k[\veps',t]/(t^n\veps'^n))$$ and $$X'_\veps=\Spec(A[t']/(t'\veps-t))=\Spec(k[t',\veps]/(\veps^n)).$$ Thus, $X'$ has two irreducible components. The first component is $\Bl_\tilD(X)=\Spec(k[t,\veps']/(\veps'^n))$; it is contained in $X'_t$. The other component is a ptm with nilradical $(\veps)$ and reduction isomorphic to $\bfP^1_k$ with coordinate $t'$; it is contracted to $\tilD$ in $X$.
\end{exam}

For our applications we will need to construct sequences of log blowings up. The following results follows from Lemma~\ref{logblowlem} by an obvious induction on the length of the sequence.

\begin{cor}\label{logblowcor}
Let $(X,M_X)$ be a log ptm with a boundary, and let $\tilD_1\.\tilD_l$ be smooth components of the associated snc boundary $\tilE\subset\tilX$. Then there exists a sequence of log blowings up $X_l\dashto X_0=(X,M_X)$ such that the following conditions are satisfied: the strict transform $X'_i$ of $X$ in $X_i$ is a monomial subscheme of $X_i$, each $X'_{i+1}$ is the blowing up of $X'_i$ along $\tilD_i$, in particular, each $X'_i\to X$ is a modification with the trivial reduction and $\tilD_i\into X'_i\into X_i$ are monomial subschemes, and $X_{i+1}=\LogBl_{\tilD_i}(X_i)$.
\end{cor}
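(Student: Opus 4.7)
I would proceed by induction on $l$, invoking Lemma~\ref{logblowlem} at each step. The base case $l = 0$ is trivial, taking $X_0 = X$ and $X'_0 = X$. For the inductive step, suppose the sequence $X_i \dashto X_0 = X$ has been constructed with $X'_i \subset X_i$ a monomial sub-ptm and $X'_i \to X$ a modification with trivial reduction. Triviality of the reduction yields $\tilX'_i = \tilX$, so $\tilD_i \subset \tilE$ is still a smooth component of the induced snc boundary of $\tilX'_i$, and as such is a monomial subscheme of $X'_i$, and a fortiori of $X_i$. I would then define $X_{i+1} := \LogBl_{\tilD_i}(X_i)$ and $X'_{i+1} := \Bl_{\tilD_i}(X'_i)$.

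To keep the induction going, two things require verification. First, that $X'_{i+1} \to X$ is again a modification with trivial reduction. This will be immediate because blowing up a smooth divisor of the reduction is the basic example of a modification with trivial reduction (as singled out at the opening of \S\ref{monomsec}), and a composition of such morphisms is again such; in particular $X'_{i+1}$ remains a ptm by Lemma~\ref{blowuplem}. Second, that $X'_{i+1}$ sits inside $X_{i+1}$ as a monomial subscheme. This is where Lemma~\ref{logblowlem} enters: applied to the log ptm $X'_i$ with its induced log structure, it identifies $\Bl_{\tilD_i}(X'_i) = X'_{i+1}$ as a monomial irreducible component of $\LogBl_{\tilD_i}(X'_i)$.

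To conclude I need to match $\LogBl_{\tilD_i}(X'_i)$ with a closed sub-log-scheme of $X_{i+1} = \LogBl_{\tilD_i}(X_i)$; this is the only genuine point to check and I expect it to be the main, albeit mild, obstacle. By Remark~\ref{logrem}(iii) both log blowings up can be computed by base change from the log blowing up of a single standard chart $Y_0$, and the inclusion $X'_i \hookrightarrow X_i$ is itself monomial in this chart, so the identification reduces to a chart computation parallel to the one in the proof of Lemma~\ref{logblowlem}: the $t$-chart of $X_{i+1}$ contains $X'_{i+1}$ cut out by vanishing of a power $\veps'^h$ of the new nilpotent parameter $\veps' = \veps/t$, where $h$ is the thickness of $X'_i$, and hence by a monomial ideal. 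With this in hand the inductive hypotheses propagate, and the full sequence $X_l \dashto X_0 = X$ with the stated properties is produced.
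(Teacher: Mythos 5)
Your proof is correct and takes essentially the same route the paper intends: the paper's own justification is a single sentence ("by an obvious induction on the length of the sequence" via Lemma~\ref{logblowlem}), and you have spelled out exactly that induction, correctly isolating the one genuine point — that $\LogBl_{\tilD_i}(X'_i)$ embeds as a monomial subscheme of $X_{i+1}$, which follows from the base-change compatibility of log blowings up (Remark~\ref{logrem}(i)/(iii)) applied to the monomial immersion $X'_i\into X_i$.
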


\subsection{Log smooth resolution}
To get our last main result it remains to assemble together the various pieces we have developed.

\begin{theor}\label{mainth}
Assume that $k$ is a field of characteristic zero, $B=\Spec(k[\pi]/(\pi^n))$ and $X\to B$ is a generically smooth morphism of finite type with a closed nowhere dense subscheme $Z\into X$. Let $M_B$ be the log structure on $B$ generated by $\bbN\log(\pi)\to\calO_B$, where $\log(\pi)$ is mapped to $\pi$. Then there exists a log smooth morphism $(Y,M_Y)\to(B,M_B)$, an irreducible component $X'$ of $Y$ and modification $X'\to X$, such that $Z'=Z\times_XX'$ is a divisor and both $X'$ and $Z'$ are monomial subschemes of $Y$. In addition, the choice of $Y$ and $X'$ can be made depending \'etale-functorially only on $X\to B$, $Z\into X$ and a generic retract $r_\eta\:\eta_X\to\eta_\tilX$, and if $X\to B$ is proper one can also choose $Y\to B$ to be proper.
\end{theor}
\begin{proof}
Acting as in the proof of Theorem~\ref{mainth1} we can find a modification $X_1\to X$ such that $X_1$ is a ptm with an ordered boundary $E_1$ and $Z_1=Z\times_XX_1$ is $E_1$-monomial. Next, the generic smoothness of $f$ implies that generically $\pi$ generates the nilradical $\calN\subset\calO_{X_1}$ on an open subscheme $U$. Principalizing $X_1\setminus U$ on $(X_1,E_1)$ by Theorem~\ref{princth} we obtain a modification $X_2\to X_1$, such that $\calN|_U=(\pi)|_U$ for $U=X_2\setminus E_2$. Next, we apply Theorem~\ref{retractth} to construct a modification $g\:X'\to X_2$ with a ptm $X'$ and an ordered boundary $E'$ containing the preimage of $E_2$ such that $X'$ possesses a retract $X'\to\tilX'$.

Set $U'=X'\setminus E'$ and $Y'=\tilX'\times_{\tilB}B=\Spec_{\tilX'}\calO_\tilX'[\pi]/(\pi^n)$. Then the natural morphism $f\:X'\to Y'$ is a modification with a trivial reduction. In addition, $f$ is an isomorphism over the complement to $\tilE'$, hence by Theorem~\ref{factorth} replacing $X'$ once again by a successive blowing up along components of $E'$, we can achieve that $f$ factors into a composition $X'=Y'_n\dashto Y'_0=Y'$ of blowings up of components of $\tilE'_i$'s, say $Y'_{i+1}=\Bl_{V_i}(Y'_i)$.

Finally, we provide $Y'$ with the log structure combined from the log structures induced from $B$ and the log structure on $\tilY'=\tilX'$ given by $\tilE'$. More formally, consider the open immersion $i\:\tilU'\into\tilY'$, then $M_{\tilY'}=i_*\calO_{\tilU'}^\times\cap\calO_{\tilY'}$ provides $\tilX'$ with the structure of a log smooth log scheme over $k$ (with the trivial log structure $k^\times\into k$) and we set $(Y',M_{Y'})=(\tilY',M_{\tilY'})\times_{\Spec(k)}(B,M_B)$. In particular, the projection $(Y,M_{Y'})\to(B,M_B)$ is log smooth. By Corollary~\ref{logblowcor} there exists a sequence of log blowings up $Y=Y_n\dashto Y_0=Y'$ such that $Y'_i\into Y_i$ is the irreducible component which is a modification of $Y'$ and the centers are log subschemes $V_i\into\tilY'=\tilY'_i\into Y'_i\into Y_i$. Thus, $X'$ is an irreducible component of $Y$ and since the log blowings up are log \'etale the composition $Y\dashto Y'\to B$ is log smooth.

The \'etale-functoriality of the construction follows from functoriality of all ingredients we used.
\end{proof}

\begin{rem}
Note for the sake of completeness that the first two steps in the above proof use the classical resolution and work only in characteristic 0, but the construction of $X'\to X_2$ (and Theorems~\ref{retractth} and \ref{factorth} used there) are purely combinatorial and work in any characteristic.
\end{rem}

\bibliographystyle{amsalpha}
\bibliography{nilresolution}

\end{document}